\newcommand{\<}{\kern.0833em}
\newtheorem{theorem}{Theorem}
\newtheorem{lemma}[theorem]{Lemma}
\newtheorem{conjecture}[theorem]{Conjecture}
\newtheorem{proposition}[theorem]{Proposition}
\newtheorem{definition}[theorem]{Definition}
\newtheorem{question}[theorem]{Question}
\newcommand{\ch}{\r{ch}}
\renewcommand{\r}{\mathrm}
\begin{document}

\begin{center}
\texttt{Comments, corrections, and related references welcomed, as
  always!}\\[.5em]
{\TeX}ed \today
\vspace{2em}
\end{center}

\title{On common divisors of multinomial coefficients}%
\thanks{This preprint is readable online at
\url{http://math.berkeley.edu/~gbergman/papers/}
and \url{http://arXiv.org/abs/0806.0607}\,.
The former version is likely to be updated more frequently than
the latter.
}
\subjclass[2010]{Primary: 11B65.
Secondary: 11A41, 11A63, 20B30.}
\keywords{common divisors of multinomial coefficients.}

\author{George M. Bergman}

\begin{abstract}
Erd\H{o}s and Szekeres showed in 1978 that for any four positive
integers satisfying $m_1+m_2=n_1+n_2,$ the two binomial coefficients
$(m_1+m_2)!/m_1!\,m_2!$ and $(n_1+n_2)!/n_1!\,n_2!$ have a common
divisor $>1.$
The analogous statement for families of $k$ $\!k\!$-nomial coefficients
$(k>1)$ was conjectured in 1997 by David Wasserman.

Erd\H{o}s and Szekeres remark that if
$m_1,\ m_2,\ n_1,\ n_2$ as above are all $>1,$ there is probably
a lower bound on the common divisor in question which goes to infinity
as a function of $m_1+m_2.$
Such a bound is obtained in \S\ref{S.bounds}.

The remainder of this note is devoted to proving results that narrow
the class of possible counterexamples to Wasserman's conjecture.
\end{abstract}
\maketitle

Above, I have worded Erd\H{o}s and Szekeres's result so as to make
clear the intended generalization to $k$ $\!k\!$-nomial coefficients.
In the next two sections, however, I formulate it
essentially as they do in~\cite{ESz}.

I have ``trimmed the fat'' from an earlier, lengthier version of
this note.
The material removed can be found in~\cite{from_nomial}.

I am indebted to Pace Nielsen for a number of valuable
corrections and comments.

\section{Background: the result of Erd\H{o}s and Szekeres.}\label{S.ESz}

We begin with two quick proofs of Erd\H{o}s and Szekeres's result, one
roughly as in~\cite{ESz}, the other group-theoretic.

\begin{theorem}[Erd\H{o}s and Szekeres \cite{ESz}]\label{T.ESz}
Suppose $i,\ j,\ N$ are integers with $0<i\leq j\leq N/2.$

Then $\binom{N}{i}$ and $\binom{N}{j}$ have a common divisor $>1.$
\end{theorem}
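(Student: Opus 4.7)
The plan is to argue by contradiction from the classical binomial identity
\[
\binom{N}{i}\binom{N-i}{j-i} \;=\; \binom{N}{j}\binom{j}{i},
\]
which counts in two ways the nested pairs $A\subseteq B\subseteq\{1,\dots,N\}$ with $|A|=i$ and $|B|=j$ (choose $A$ first and extend to $B$, versus choose $B$ first and then $A$ inside it). Suppose $\gcd\bigl(\binom{N}{i},\binom{N}{j}\bigr)=1$. Then $\binom{N}{i}$ divides the left-hand side, hence the right-hand side $\binom{N}{j}\binom{j}{i}$; being coprime to $\binom{N}{j}$, it must divide $\binom{j}{i}$.

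To finish, I would derive a contradiction by a term-by-term size comparison. The hypothesis $0<i\le j\le N/2$ gives $N\ge 2$ and $j<N$, so each factor $N-t$ with $0\le t<i$ strictly exceeds the corresponding factor $j-t\ge 1$, yielding
\[
\binom{N}{i} \;=\; \prod_{t=0}^{i-1}\frac{N-t}{i-t} \;>\; \prod_{t=0}^{i-1}\frac{j-t}{i-t} \;=\; \binom{j}{i} \;\ge\; 1,
\]
which is incompatible with $\binom{N}{i}\mid\binom{j}{i}$.

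The proof is essentially routine once the identity is in hand, and the only point requiring any attention is the strictness of $j<N$ (hence of $\binom{N}{i}>\binom{j}{i}$), which the hypothesis $j\le N/2$ with $i\ge 1$ readily supplies. A group-theoretic variant, presumably the second proof promised above, can be recovered by applying the orbit--stabilizer theorem to the action of the Young subgroup $S_j\times S_{N-j}\le S_N$ on $i$-subsets of $\{1,\dots,N\}$: the orbit of $\{1,\dots,i\}$ has size $\binom{j}{i}$, and combining this with the transitive action of $S_N$ on $i$-subsets and on $j$-subsets reproduces the same identity as a chain of group indices, after which the divisibility argument runs as before.
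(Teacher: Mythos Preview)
Your argument is correct and essentially identical to the paper's first proof: both use the identity $\binom{N}{i}\binom{N-i}{j-i}=\binom{N}{j}\binom{j}{i}$, deduce $\binom{N}{i}\mid\binom{j}{i}$ from the coprimality assumption, and then compare the falling products $N(N{-}1)\cdots(N{-}i{+}1)$ and $j(j{-}1)\cdots(j{-}i{+}1)$ to reach a contradiction. Your aside on the group-theoretic variant is in the right spirit, though the paper's second proof is phrased slightly differently (as a count of $S_N$-orbits on pairs of decompositions rather than via a Young subgroup action).
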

\noindent{\em Proof following \cite{ESz}.}
Note that
\begin{equation}\begin{minipage}[c]{35pc}\label{d.2ways}
$\binom{N}{i}\ \binom{N-i}{j-i}\ =\ \binom{N}{j}\ \binom{j}{i}.$
\end{minipage}\end{equation}
Now if the first factors on the two sides of the above equation
were relatively prime, the second factor on each side would have
to be divisible by, and hence at least as large as, the first factor on
the other side.
In particular, we would have $\binom{j}{i}\geq\binom{N}{i}.$
Multiplying both sides by $i\,!,$ this would say
$j\<(j{-}1)\dots(j{-}\<i{+}1)\geq N(N{-}1)\dots(N{-}\<i{+}1),$
which is clearly false.\vspace{.5em}

\noindent{\em Group-theoretic proof.}
Let $X$ denote the set of decompositions $(A,B)$ of
$\{1,\dots,N\}$ into a set $A$ of $i$ elements and
a complementary set $B$ of $N{-}\<i$ elements, and
$Y$ the set of decompositions $(C,D)$ of the same
set into complementary sets of $j$ and $N{-}\<j$ elements.
The permutation group $S_N$ acts transitively on each
of these sets, which have cardinalities
$\binom{N}{i}$ and $\binom{N}{j}$ respectively.

Consider the product action of $S_N$ on $X\times Y.$
Each orbit must have cardinality divisible by both
$\r{card}(X)$ and $\r{card}(Y),$ hence if these were
relatively prime, every orbit would have cardinality
at least $\r{card}(X\times Y),$ so there could be only one orbit.
But in fact, the orbits correspond to the possible choices
of cardinalities for $A\cap C,\ B\cap C,\ A\cap D$ and $B\cap D,$
and these can be chosen in different ways; e.g., so that
$A\subseteq C$ or so that $A\subseteq D,$ so there
are at least two orbits.\qed\vspace{6pt}

\section{Lower bounds}\label{S.bounds}

The above two proofs are not as different as they look:
the value~(\ref{d.2ways}) is the trinomial coefficient
$N!/i!\,(j{-}\<i)!\,(N{-}\<j)!,$ which counts the orbit of $X\times Y$
consisting of decompositions with $A\subseteq C.$
(The right-hand side of~(\ref{d.2ways}) essentially says
``break $\{1,\dots,N\}$ into $C$ and $D,$ then choose $A$
within $C$'', while the left-hand side says ``break
$\{1,\dots,N\}$ into $A$ and $B,$ then choose $C-A$ within $B$''.)

Note that in the first proof above, the ratio of the numbers we
compared, $N(N{-}1)\dots(N{-}\<i{+}1)$ and $j(j{-}1)\dots(j{-}\<i{+}1),$
can be written
$(N/j)((N{-}1)/(j{-}1))\dots((N{-}\<i{+}1)/(j{-}\<i{+}1))\geq 2^i.$
As noted in \cite{ESz}, this implies that $\binom{N}{i}$
and $\binom{N}{j}$ have a common divisor $\geq 2^i.$
This estimate goes to infinity with $i,$ but gives no information
on how the greatest common divisor of these
numbers behaves as a function of $N$
for $i$ fixed; indeed, it is observed in~\cite{ESz} that when $i=1,$
that greatest common divisor is $2$ in infinitely many cases.
Let us now show, however, as Erd\H{o}s and Szekeres
suspected, that when
\begin{equation}\begin{minipage}[c]{35pc}\label{d.i>1}
$i\ >\ 1,$
\end{minipage}\end{equation}
that greatest common divisor goes to infinity with $N.$
To this end, we shall bring in the other orbits of our action of $S_N.$

Fixing $N,\ i,$ and $j,$ we find that for each orbit of pairs of
decompositions $\{1,\dots,N\}=A\sqcup B=C\sqcup D,$
the integer $h=\r{card}(A\cap D)$ is an invariant of the
orbit, uniquely determining the orbit, and that this
invariant can take on any value satisfying $0\leq h\leq i.$
The cardinality of the orbit associated with $h$ is
given by the $\!4\!$-nomial coefficient
\begin{equation}\begin{minipage}[c]{35pc}\label{d.orbitsize}
$\binom{N}{i}\binom{i}{h}\binom{N-i}{j-i+h}\ =\ Q_h\ =
\ \binom{N}{j}\binom{j}{i-h}\binom{N-j}{h}.$
\end{minipage}\end{equation}

By~(\ref{d.orbitsize}), each of these values $Q_h$
must be divisible by the integer
\begin{equation}\begin{minipage}[c]{35pc}\label{d.L}
$L\ =\ \r{l.c.m.}\,(\binom{N}{i},\,\binom{N}{j}).$
\end{minipage}\end{equation}
Our idea is that as $h$ varies from $0$ to $i,$ $Q_h$ should vary in
a ``nice'' fashion; but if the above value $L$ were too large,
the big gaps between the available values would make this impossible.
Let us try out this idea on $Q_0,\ Q_1$ and $Q_2.$
Since multinomial coefficients are multiplicative in
nature, let us subtract the product of the first and last of
these from the square of the middle one,
after multiplying these products by integer factors
$(2\<i$ and $i-1$ respectively) that compensate for the
different denominators of the binomial coefficients in question.
Expanding the $Q_h$ by the right-hand expression
in~(\ref{d.orbitsize}), we can say that $L^2$ divides
\begin{equation}\begin{minipage}[c]{35pc}\label{d.A}
$(i\<{-}1)\,Q_1^2\ -\ 2\<i\,Q_0\,Q_2\\[.3em]
\hspace*{1em}=\ \binom{N}{j}^2
\ ((i\<{-}1)\binom{j}{i-1}^2\binom{N-j}{1}^2-
2\<i\,\binom{j}{i}\binom{j}{i-2}\binom{N-j}{0}\binom{N-j}{2})\\[.3em]
\hspace*{1em}=\ \binom{N}{j}^2\binom{j}{i-2}^2
\ ((i\<{-}1)(\frac{j-i+2}{i-1})^2\,(\frac{N-j}{1})^2
-2\<i\,\frac{(j-i+2)(j-i+1)}{i(i\<{-}1)}\,
\frac{(N-j)(N-j-1)}{2\<\cdot 1})\\[.3em]
\hspace*{1em}=\ \frac{(j-i+2)(N-j)}{i-1}\ \binom{N}{j}^2\binom{j}{i-2}^2
\ ((j-i+2)(N-j)-(j-i+1)(N-j-1))\\[.3em]
\hspace*{1em}=\ \frac{(j-i+2)(N-j)}{i-1}\ \binom{N}{j}^2\binom{j}{i-2}^2
\ (N-i+1).$
\end{minipage}\end{equation}

Since the above expression is positive, it gives an upper bond on $L^2.$
Moreover, the cancellation, at the last
step, of the degree-$\!2\!$ terms in the final
parenthesis gives the goal we were aiming for: The above upper bound
is of smaller magnitude than the products we started with.
We now make some estimates to get a simpler expression.
Note that $j\<{-}\<i\<{+}\<2\leq j\leq N/2,$ $N{-}\<j<N,$
$\binom{j}{i-2}\leq \binom{N}{i-2}/2^{i-2}$ (cf.\ second paragraph
of this section), and $N{-}\<i\<{+}1<N.$
Hence~(\ref{d.A}) gives
\begin{equation}\begin{minipage}[c]{35pc}\label{d.L^2leq}
$L^2\ <\ \frac{N^3}{2^{2i-3}(i-1)}\ \binom{N}{j}^2\binom{N}{i-2}^2.$
\end{minipage}\end{equation}

Now the greatest common divisor of $\binom{N}{i}$ and
$\binom{N}{j}$ is their product divided by $L.$
When we divide their product by the square root of the right hand
side of~(\ref{d.L^2leq}),
the factors $\binom{N}{j}$ cancel, while $\binom{N}{i}$ in the
product and $\binom{N}{i-2}$ in the
bound on $L$ almost cancel, with quotient
$(N{-}\<i\<{+}\<2)(N{-}\<i\<{+}1)/i\<(i-1).$
So the g.c.d.\ is at least
\begin{equation}\begin{minipage}[c]{35pc}\label{d.prod/sqt}
$\frac{(N-i+2)(N-i+1)}{i\<(i-1)}\,(\frac{2^{2i-3}(i-1)}{N^3})^{1/2}.$
\end{minipage}\end{equation}

Bounding $N{-}\<i\<{+}\<2$ and
$N{-}\<i\<{+}1$ below by $N/2$ we get the final bound in

\begin{theorem}\label{T.bound}
Suppose $i,\ j,\ N$ are integers with $2\leq i\leq j\leq N/2.$
Then the greatest common divisor of $\binom{N}{i}$ and $\binom{N}{j}$
is bounded below by\textup{~(\ref{d.prod/sqt})}; hence by
\begin{equation}\begin{minipage}[c]{35pc}\label{d.rough}
$N^{1/2}\ 2^{i-7/2}\,/\,i\,(i-1)^{1/2}.$\qed
\end{minipage}\end{equation}
\end{theorem}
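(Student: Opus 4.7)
The plan is to exploit the observation already made in~(\ref{d.orbitsize}) that every $Q_h$ is a multiple of both $\binom{N}{i}$ and $\binom{N}{j}$, and hence of $L$ as defined in~(\ref{d.L}). Since $\gcd(\binom{N}{i},\binom{N}{j}) = \binom{N}{i}\binom{N}{j}/L$, it suffices to produce a positive integer divisible by $L^2$ but small compared with $(\binom{N}{i}\binom{N}{j})^2$; dividing $\binom{N}{i}\binom{N}{j}$ by the square root of the resulting upper bound on $L^2$ will then give the claimed lower bound on the g.c.d.

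The crux is to form an integer combination of products $Q_h Q_{h'}$ in which the leading-order behaviour in $N$ cancels. With $i \geq 2$ the three orbits $Q_0,Q_1,Q_2$ are all available, and the natural candidate is $(i-1)\<Q_1^2 - 2\<i\,Q_0\<Q_2$, its coefficients calibrated to compensate for the denominators in $\binom{i}{0}\binom{i}{2}$ versus $\binom{i}{1}^2$. Expanding each $Q_h$ by the right-hand side of~(\ref{d.orbitsize}) and factoring out the common $\binom{N}{j}^2\binom{j}{i-2}^2$, the bracketed remainder collapses to $(j-i+2)(N-j)(N-i+1)/(i-1)$; the key point is that the degree-$2$ leading terms of the two summands annihilate, dropping the overall $N$-degree by $2$ and leaving only a single linear factor $(N-i+1)$. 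This simultaneous cancellation and preservation of positivity is the step I expect to be most delicate: without the precise ratio $(i-1):2\<i$ one would lose either positivity or the desired drop in $N$-degree, and the bound that follows would be useless.

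Once this identity is in hand, the rest is routine estimation. Using $j-i+2 \leq j \leq N/2$, $N-j < N$, $N-i+1 < N$, and the inequality $\binom{j}{i-2} \leq \binom{N}{i-2}/2^{i-2}$ from the second paragraph of this section, the combination above bounds $L^2$ from above by~(\ref{d.L^2leq}). Taking square roots and dividing into $\binom{N}{i}\binom{N}{j}$, the $\binom{N}{j}$ factors cancel exactly and the identity $\binom{N}{i}/\binom{N}{i-2} = (N-i+2)(N-i+1)/i\<(i-1)$ supplies the remaining factor needed to obtain~(\ref{d.prod/sqt}). Finally, estimating $N-i+1,\ N-i+2 \geq N/2$ (valid since $i \leq N/2$) yields~(\ref{d.rough}).
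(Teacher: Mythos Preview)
Your proposal is correct and follows essentially the same approach as the paper: the same integer combination $(i-1)\,Q_1^2-2\,i\,Q_0\,Q_2$, the same expansion via the right-hand side of~(\ref{d.orbitsize}) leading to the collapsed factor $(N-i+1)$, the same chain of estimates producing~(\ref{d.L^2leq}), and the same final simplifications yielding~(\ref{d.prod/sqt}) and~(\ref{d.rough}). Nothing is missing and no step diverges from the paper's argument.
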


For each $i,$~(\ref{d.rough}) goes to infinity as a function of $N;$
clearly it can also be weakened to a bound
that goes to infinity in $N$ independent of $i.$

Can one get still better bounds if one assumes $i>2$?
Our calculation above was based on the idea that $Q_0\,Q_1^{-2}\,Q_2$
should be well-behaved;
note that the exponents in that expression are
the binomial coefficients $1,\ 2,\ 1$ taken with alternating signs; so
for $i\geq 3,$ the expression $Q_0\,Q_1^{-3}\,Q_2^3\,Q_3^{-1}$ might
be still better behaved, suggesting that one look at the difference
between appropriate integer multiples of $Q_0\,Q_2^3$ and $Q_1^3\,Q_2.$
But in fact, the higher power of $L$ that would be involved in the
analog of~(\ref{d.L^2leq}) seems to negate the advantage coming
from the larger number of terms which cancel in that difference.
On the other hand, for $i\geq 4,$ an appropriate linear combination
of $Q_0\,Q_4,\ Q_1\,Q_3$ and $Q_2^2$ might yield an improved estimate
without suffering from this disadvantage.
I leave these questions to others to investigate.

(We remark that the focus of~\cite{ESz} was not the question answered
above, but the value of the largest prime dividing
both $\binom{N}{i}$ and $\binom{N}{j}.)$

\section{Wasserman's conjecture on multinomial
coefficients.}\label{S.defs&}

Before stating and discussing the conjectured generalization
of Theorem~\ref{T.ESz}, let us set up a notation and language for
multinomial coefficients, and record some immediate properties
thereof.

\begin{definition}\label{d.k-nom}
If $a_1,\dots,a_k$ are nonnegative integers, we define
\begin{equation}\begin{minipage}[c]{35pc}\label{d.defch}
$\ch(a_1,\dots,a_k)\ =\ (a_1+\dots+a_k)!\,/\,a_1!\,\dots\,a_k!$
\end{minipage}\end{equation}
\textup{(}modeled on the reading ``$n$-choose-$m\!$''
for binomial coefficients\textup{)}.
Thus, $\ch(a_1,\dots,a_k)$ counts the ways of
partitioning a set of cardinality
$a_1+\dots+a_k$ into a list of subsets, of
respective cardinalities $a_1,\dots,a_k.$

We shall call an integer~\textup{(\ref{d.defch})} a
\textup{$\!k\!$-nomial coefficient} of {\em weight}
$a_1+\dots+a_k.$
It will be called a {\em proper} $\!k\!$-nomial coefficient
if none of the $a_i$ is zero.

A $\!k\!$-nomial coefficient will also be called
a {\em multinomial} coefficient of {\em nomiality} $k.$
\end{definition}

The more usual notation for multinomial coefficients
is $\binom{\,a_1+\,\dots\,+\,a_k\,}{a_1,\ \dots\,,\ a_k};$
but $\ch(a_1,\dots,a_k)$ is visually simpler.

We note three straightforward identities.
First, ``monomial'' coefficients are trivial:
\begin{equation}\begin{minipage}[c]{35pc}\label{d.1nom}
$\ch(n)\ =\ 1.$
\end{minipage}\end{equation}

Second, the operator $\ch$ is commutative, i.e., invariant under
permutation of its arguments:
\begin{equation}\begin{minipage}[c]{35pc}\label{d.com}
$\ch(a_1,\dots,a_k)\ =\ \ch(a_{\pi(1)},\dots,a_{\pi(k)})$
\quad for $\pi\in S_k.$
\end{minipage}\end{equation}

Finally, given any {\em string of strings} of nonnegative integers,
$a_1,\dots,a_{j_1};\ \dots\,;\ a_{j_{k-1}+1},\dots,a_{j_k},$
we have the associativity-like relation
\begin{equation}\begin{minipage}[c]{35pc}\label{d.assoc}
$\ch(a_1,\dots,a_{j_k})\\[.3em]
\hspace*{1em}=\ %
\ch(a_1+\dots+a_{j_1},\ \dots\ ,a_{j_{k-1}+1}+\dots+a_{j_k})\cdot
\ch(a_1,\dots,a_{j_1})\cdot \,\ldots\,
\cdot \ch(a_{j_{k-1}+1},\dots,a_{j_k}).$
\end{minipage}\end{equation}

In particular,~(\ref{d.assoc}) tells us that any multinomial coefficient
is a multiple of any multinomial coefficient (generally of smaller
nomiality) obtained from it by collecting
and summing certain of its arguments.\vspace{.3em}

In the language introduced above, Erd\H{o}s and Szekeres's result
says that any two proper binomial coefficients of equal weight $N$
have a common divisor $>1.$
This implies the same conclusion for two
proper $\!k\!$-nomial coefficients of equal weight $N,$ for any
$k\geq 2,$ since~(\ref{d.assoc}) implies that these are multiples
of two proper binomial coefficients of weight $N.$
But a stronger statement would be

\begin{conjecture}[David Wasserman, personal communication, 1997;
cf.~{\cite[p.131]{RKG}}]\label{Cj}
For every $k>1,$ every family of $k$ proper $\!k\!$-nomial
coefficients of equal weight $N$ has a common divisor $>1.$
\end{conjecture}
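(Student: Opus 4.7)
My plan is to combine the group-theoretic framework from Section~\ref{S.ESz} with Kummer's theorem on $p$-divisibility of multinomial coefficients. Write the $k$ given coefficients as $c_i = \ch(a_{i,1},\dots,a_{i,k})$ for $1\leq i\leq k$. Since $\gcd(c_1,\dots,c_k)>1$ iff some prime $p$ divides every $c_i$, it suffices to exhibit such a $p$. By Kummer, $p\mid c_i$ iff the base-$p$ addition $a_{i,1}+\dots+a_{i,k}=N$ carries at least once; equivalently, in the spirit of the group-theoretic proof of Theorem~\ref{T.ESz}, iff a Sylow $p$-subgroup $P$ of $S_N$ has no fixed point on the set $X_i$ of decompositions of type $(a_{i,1},\dots,a_{i,k})$. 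Recall that $P$'s orbits on $\{1,\dots,N\}$ consist of $d_j$ orbits of size $p^j$ for each $j$, where $\sum_j d_j\<p^j$ is the base-$p$ expansion of $N$; so a $P$-fixed decomposition is exactly an assignment of those orbits to the $k$ parts, forcing each $a_{i,j}$ to possess a base-$p$ digit expansion that sums digit-wise to $N$.

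A first reduction: if some prime $p\leq N$ has base-$p$ digit sum $\sigma_p(N)<k$, then no proper $k$-nomial of weight $N$ is carry-free in base $p$, since each of the $k$ positive parts must contribute at least one nonzero digit somewhere, yet the total number of nonzero digit contributions across all parts in a carry-free decomposition is bounded by $\sigma_p(N)$. In that situation $p$ divides every proper $k$-nomial of weight $N$ and the conjecture is immediate. So we may assume $\sigma_p(N)\geq k$ for every prime $p\leq N$. Under this assumption I would next bound, for each such $p$, the set of proper $k$-nomial decompositions of $N$ that are carry-free in base $p$, and try to argue by a covering/counting estimate that no family of only $k$ proper $k$-nomials can have, for every prime $p$, at least one of its members carry-free in base $p$.

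The main obstacle, and the reason the statement appears here only as a conjecture, is precisely this last step: the carry-free families attached to different primes overlap in subtle ways, and there is no obvious uniform obstruction ruling out, for arbitrary large $N$, a contrived family of $k$ proper $k$-nomial decompositions each of which is carry-free in some different prime. Short of a genuinely new idea---perhaps an iterated or refined $S_N$-action that simultaneously tracks the base-$p$ structure of $N$ as $p$ varies, or a sharper use of the associativity-like identity~(\ref{d.assoc}) to reduce to lower nomiality---the realistic ambition, and the one actually pursued in the remaining sections of this paper, is to narrow the class of potential counterexamples rather than to eliminate them outright.
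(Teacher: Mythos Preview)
Your proposal is not a proof, and you do not claim it to be one; that is exactly right, because the statement is a \emph{conjecture}, and the paper does not prove it either. There is no ``paper's own proof'' to compare against: the paper explicitly says, immediately after stating Conjecture~\ref{Cj}, that it is unclear whether the methods of \S\S\ref{S.ESz}--\ref{S.bounds} can be adapted, and devotes the remaining sections to narrowing the class of potential counterexamples rather than to settling the question.

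Your preliminary reductions are correct and match the paper's setup. The Kummer/carry criterion you invoke is Lemma~\ref{L.Kummer}, and your ``first reduction'' (that if $\sigma_p(N)<k$ for some prime $p$ then every proper $k$-nomial coefficient of weight $N$ is divisible by $p$) is exactly~(\ref{d.sum<k}). Your Sylow interpretation---that $p\mid c_i$ iff a Sylow $p$-subgroup of $S_N$ has no fixed point on the decomposition set $X_i$---is also correct: a $P$-fixed decomposition forces each part to be a union of $P$-orbits, and a short digit-sum argument shows this is equivalent to the carry-free condition, so $|X_i^P|>0$ iff $p\nmid c_i$.

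The place where you stop is the genuine obstacle. Your proposed ``covering/counting estimate''---arguing that no family of only $k$ proper $k$-nomials can, for every prime $p$, contain a $p$-acceptable member---is precisely the step no one knows how to carry out. The paper's own commentary (end of \S\ref{S.defs&} and \S\ref{S.whither}) says the same: the orbit structure for $k\geq 3$ is unwieldy, pairwise information does not obviously combine via inclusion--exclusion, and for large $k$ the degree count in the analogue of~(\ref{d.N^6/3^6}) even goes the wrong way. So your honest acknowledgment that ``the realistic ambition\dots is to narrow the class of potential counterexamples'' is in full agreement with the paper; there is no missing idea on your side beyond the one that is missing for everyone.
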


Note the need for the condition $k>1:$ by~(\ref{d.1nom}), the
corresponding statement with $k=1$ is false.

It is not clear whether one can somehow adapt the methods of the
preceding sections to prove this conjecture.
Even looking at the case $k=3,$ one finds that the set of
orbits into which the product of three orbits of $\!3\!$-fold
partitions of $\{1,\dots,N\}$ decomposes is an unwieldy structure,
in which the single index ``$\!h\!$'' that parametrized the orbits
in a product of two orbits of $\!2\!$-fold partitions is replaced by
$20$ parameters (computation sketched in~\cite[\S1]{from_nomial}).
Moreover, to get divisors
common to a threesome of trinomial coefficients by studying
the set of their common multiples, one would presumably have
to apply the inclusion-exclusion principle, for which one would
need {\em upper} bounds on their {\em pairwise} common divisors.

In the remaining sections we will take a more pedestrian approach,
and obtain results narrowing the class of cases where one might
look for counterexamples to the above conjecture.

\section{A lemma of Kummer.}\label{S.Kummer}

A basic tool in studying divisibility properties of
multinomial coefficients is the next result, proved
for binomial coefficients, i.e., for $k=2,$ by Kummer.
His proof generalizes without difficulty to arbitrary $k.$
(The result appears in~\cite{EK} as the third-from-last display on
p.116.
The symbol $\mathit{\Pi}(n)$ there denotes what is now written~$n!\,.)$

\begin{lemma}[{after Kummer~\cite{EK}, cf.~\cite{AG}}]\label{L.Kummer}
Let $a_1,\dots,a_k$ be natural numbers, and $p$ a prime.
Then the power to which $p$ divides $\ch(a_1,\dots,a_k)$
is equal to the number of ``carries'' that must be performed
when the sum $a_1+\dots+a_k$ is computed in base~$p.$

In particular, $\ch(a_1,\dots,a_k)$ is relatively prime to $p$
if and only if that sum can be computed ``without carrying'', i.e., if
and only if for each $i,$ the sum of the coefficients of $p^i$ in the
base-$\!p\!$ expressions for $a_1,\dots,a_k$ is less than $p$
\textup{(}and hence gives the coefficient of $p^i$
in the base-$\!p\!$ expression for $a_1+\dots+a_k).$
\qed
\end{lemma}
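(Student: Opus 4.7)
The plan is to deduce the lemma from Legendre's classical formula for the $p$-adic valuation of a factorial: writing $s_p(n)$ for the sum of the base-$p$ digits of $n$,
$$v_p(n!) \;=\; \frac{n - s_p(n)}{p-1}.$$
I would quote this as known, since it follows routinely from $v_p(n!) = \sum_{i\ge 1}\lfloor n/p^i\rfloor$ by expanding $n$ in base $p$ and rearranging.

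Applying the formula to the numerator and each denominator in $\ch(a_1,\dots,a_k) = N!/(a_1!\cdots a_k!)$, where $N = a_1+\cdots+a_k$, the linear contributions $N/(p-1)$ and $\sum_j a_j/(p-1)$ cancel, leaving
$$v_p(\ch(a_1,\dots,a_k)) \;=\; \frac{1}{p-1}\Bigl(\sum_{j=1}^k s_p(a_j) - s_p(N)\Bigr).$$
It then remains to identify this right-hand side with the number of carries in the base-$p$ addition $a_1+\cdots+a_k = N$. Letting $a_{j,i}$ denote the coefficient of $p^i$ in the base-$p$ expansion of $a_j$, I would define the carry sequence by $c_0 = 0$ and $c_{i+1} = \lfloor (c_i + \sum_j a_{j,i})/p\rfloor$, so that the $i$-th base-$p$ digit of $N$ is $n_i = c_i + \sum_j a_{j,i} - p\,c_{i+1}$. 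Summing over $i\ge 0$ and collapsing the telescoping sum $\sum_i c_i - p\sum_i c_{i+1} = -(p-1)\sum_{i\ge 1}c_i$ yields
$$s_p(N) \;=\; \sum_{j=1}^k s_p(a_j) - (p-1)\sum_{i\ge 1} c_i,$$
and hence $v_p(\ch(a_1,\dots,a_k)) = \sum_{i\ge 1}c_i$, the total carry. The ``in particular'' clause is then immediate, since no carries occur exactly when $\sum_j a_{j,i} < p$ for every $i$.

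The only subtlety I expect is that, unlike the binary case $k=2$ treated by Kummer, for $k\ge 3$ a single carry $c_i$ can exceed $1$ (a column of digits all equal to $p-1$ produces a carry of order $k/p$), so the phrase ``number of carries'' in the lemma must be understood as the total carry $\sum_i c_i$, counted with multiplicity. The telescoping identity above pins down this interpretation unambiguously, and no further obstacle remains.
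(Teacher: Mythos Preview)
Your proof is correct. Note, however, that the paper does not actually supply a proof of this lemma: the statement ends with a \qed\ and a citation to Kummer and Granville, with the comment that Kummer's argument for $k=2$ ``generalizes without difficulty to arbitrary~$k$.'' What the paper \emph{does} provide, in the remarks immediately following the lemma, is precisely the digit-sum description of the carry count that you derive: it observes that the total number of carries equals $\bigl(\sum_j s_p(a_j)-s_p(N)\bigr)/(p-1)$, and it discusses the same subtlety you flag, namely that for $k>2$ a single column may contribute a carry $s_i>1$, so ``number of carries'' means $\sum_i s_i$. Your argument via Legendre's formula is the standard route to this identity and matches the paper's informal treatment; you have simply filled in the details the paper leaves to the reader and the references.
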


Remarks: In the classical case $k=2,$ the meaning of the ``number
of carries'' is clear: it is the number of values of $i$
for which the coefficient of $p^i$ in the expression for
$a_1+a_2$ is not the sum of the corresponding
coefficients from $a_1$ and $a_2,$ possibly augmented by
a $1$ carried from the next-lower column, but rather,
the result of subtracting $p$ from that sum.
In the case of $k$ summands $a_1,\dots,a_k,$ we could define the number
of carries recursively in terms of $\!2\!$-term addition, as the
sum of the number of ``carries'' that occur in adding
$a_2$ to $a_1,$ the number that occur in adding $a_3$ to that sum, etc..
Or we could consider
the computation of $a_1+\dots+a_k$ to be performed all at once,
by a process of successively adding up,
for each $i,$ the coefficients of $p^i$ in the summands,
together with any value carried from lower digits, writing the
result as $s_i\<p+t_i$ with $0\leq t_i<p,$ taking
$t_i$ to be the coefficient of $p^i$ in the sum, and ``carrying'' $s_i$
into the next column.
We would then consider this step of the calculation to contribute
$s_i$ to our tally of the number of ``carries''.
Since turning a coefficient $p$ of $p^i$ into
a coefficient $1$ of $p^{i+1}$ reduces the sum of the digits
by $p-1,$ the number of carries under either description can be
evaluated by summing the digits in the base-$\!p\!$ expressions
for $a_1,\dots,a_k,$ subtracting from their total the sum of the
digits of $a_1+\dots+a_k,$ and dividing by $p-1.$

However, we will not need the exact value of the
number of carries, but only to know when it is zero, and for this,
the easy criterion in the second paragraph of the above lemma suffices.

In discussing Conjecture~\ref{Cj}, the
following language will be useful.

\begin{definition}\label{D.accept}
Given a positive integer $N$ and a prime $p,$ we will
call a decomposition of $N$ as a sum of positive integers
$N=a_1+\dots+a_k$ {\em $\!p\!$-acceptable} if
$\ch(a_1,\dots,a_k)$ is not
divisible by $p;$ equivalently, if for each $i,$ the
$\!i\!$-th digit of the base-$\!p\!$ expression for
$N$ is the sum of the $\!i\!$-th digits of the base-$\!p\!$ expressions
for $a_1,\dots,a_k.$
\end{definition}

This language reflects the point of view of someone trying to find a
counterexample to the conjecture: such a counterexample for given $k$
and $N$ would require a set of $k$ decompositions of $N$ into
$k$ positive summands, such that for every prime $p,$ at least
one of these decompositions is $\!p\!$-acceptable.
And, indeed, in obtaining our results supporting the conjecture,
we shall in general put ourselves in the position of trying
to find such a counterexample, and discover obstructions to
getting $\!p\!$-acceptability for all $p.$

Note that by the last criterion in Definition~\ref{D.accept}, we have:
\begin{equation}\begin{minipage}[c]{35pc}\label{d.sum<k}
If an integer $N$ has digit-sum $<k$ when written to
the prime base $p,$ then every proper $\!k\!$-nomial
coefficient of weight $N$ is divisible by $p.$
\end{minipage}\end{equation}

For $k=3,$ the condition of digit-sum $<k$ to base $p$
means that $N$ can be written $p^e$ or $p^e+p^{e'}.$
Examining the first $100$ positive integers, one finds that $75$
of them can be so written for some prime $p,$ and so cannot be the
weight of a counterexample to Wasserman's conjecture for that $k.$
The remaining $25$ are
\begin{equation}\begin{minipage}[c]{35pc}\label{d.25_<100}
$15,\ 21,\ 35,\ 39,\ 45,\ 51,\ 52,\ 55,\ 57,\ 63,\ 69,\ 70,
\ 75,\ 76,\ 77,\ 78,\ 85,\ 87,\ 88,\ 91,\ 92,\ 93,\ 95,\ 99,\ 100.$
\end{minipage}\end{equation}

My early pursuit of this problem involved case-by-case elimination
of these values.
In \cite[\S3]{from_nomial} I reproduce the
ad hoc arguments for one of the less easily eliminated cases, $N=78.$
Below, however, we shall give general arguments that exclude
all values in a much larger range.

Let us note one other immediate consequence of
the final criterion of Definition~\ref{D.accept}.
\begin{equation}\begin{minipage}[c]{35pc}\label{d.2vals}
If $N=a_1+\dots+a_k$ is a $\!2\!$-acceptable decomposition
of a positive integer, then the
powers of $2$ dividing $a_1,\dots,a_k$ are distinct.
\end{minipage}\end{equation}

\section{Preview of results on
Wasserman's Conjecture for $k=3.$}\label{S.k=3}

Suppose we are given a positive integer $N,$ and wish to know whether
it is a counterexample to Conjecture~\ref{Cj} for $k=3;$ i.e., whether
there exist three decompositions of $N$ as sums of three
positive integers, such that for every prime $p,$ one of those
decompositions is $\!p\!$-acceptable.

If so, then one of those
decompositions must have a summand quite close to $N.$
Namely, if $p_{\max}^d$ is the largest prime power $\leq N,$ then
a decomposition that is $\!p_{\max}\!$-acceptable must include
a summand $\geq p_{\max}^d$ (since on adding its
summands in base $p_{\max},$ the digit
in the $p_{\max}^d$ column cannot arise by carrying).
Let us write that decomposition as
\begin{equation}\begin{minipage}[c]{35pc}\label{d.N-i-j}
$N\ =\ (N{-}\<i\<{-}\<j)+i+j,$
\end{minipage}\end{equation}
where $N{-}\<i\<{-}\<j\geq p_{\max}^d,$ so that $i$ and $j$ are small.
Note that the corresponding trinomial coefficient
\begin{equation}\begin{minipage}[c]{35pc}\label{d.chN-i-j}
$\ch(N{-}\<i\<{-}\<j,\,i,\,j)\ =
\ N(N{-}1)\dots(N{-}\<i\<{-}\<j\<{+}1)\,/\,i!\,j!$
\end{minipage}\end{equation}
is not divisible by any prime {\em not} dividing
one of $N,\ N{-}\<1,\,\dots,\,N{-}\<i\<{-}\<j\<{+}\<1;$
so primes not dividing any of those
integers can be ignored in studying the conditions that must be
satisfied by the other two decompositions of~$N.$
On the other hand, $\ch(N{-}\<i\<{-}\<j,\,i,\,j)$
{\em will} tend to be divisible by the primes that do divide one
of~$N,\ N{-}\<1,\,\dots,\,N{-}\<i\<{-}\<j\<{+}\<1:$ that can only
fail if the relatively
small denominator $i!\,j!$ in~(\ref{d.chN-i-j}) cancels all
occurrences of those primes in the numerator.

The further study of this situation bifurcates
into two cases:  If $i$ and $j$ are as small as possible, namely,
both equal to $1,$ then in deducing conditions that must be
satisfied by the other two decompositions,
we have the advantage of knowing that no primes
are cancelled by the denominator $i!\,j!$
in~(\ref{d.chN-i-j}); on the other hand,
the only primes we have to work with are those dividing $N(N{-}\<1).$
We shall study this situation in the next section, and show
that there can be no such example
with $N<1726,$ or, if $N$ is even, with $N<6910.$

In \S\ref{S.i+j>2} we study the reverse situation, where $i+j>2.$
Here the use of the primes
dividing $N(N{-}\<1)(N{-}\<2)$ will prove significantly stronger than
the use of primes dividing $N(N{-}\<1),$ but the cancellation
of factors by the denominator $i!\,j!$ will take its toll.
That problem is not serious for low values of $i+j:$
we will find that there can be no counterexamples with $2<i+j<11.$
Thus, in any counterexample falling under this case
we must have $N-p_{\max}^d\geq 11.$
Looking individually at the first few $N$ satisfying that inequality,
and applying ad hoc considerations to these,
we shall show that there are no counterexamples with $N<785.$

What about the primes dividing
$(N{-}\<3)\,\dots\,(N{-}\<i\<{-}\<j\<{+}1)$?
For any particular $N,$ these can be useful in excluding
possible counterexamples; but the general methods
of \S\ref{S.i+j>2} below cannot make use of them.
Perhaps some reader will succeed in doing so.

\section{The case $i=j=1.$}\label{S.i=j=1}

The proposition below gives the first of the two results
previewed above, and a bit more.
In the proof, and the remainder of this note,
by ``the prime-powers factors of $N$'' we shall mean the factors
occurring in the decomposition of $N$ into positive powers of
{\em distinct} primes.
(So in this usage, $4$ is among the ``prime-power factors''
of $12,$ but $2$ is not.)

\begin{proposition}\label{P.(N-2)+1+1}
Suppose $N$ is a positive integer having decompositions with positive
integer summands,
\begin{equation}\begin{minipage}[c]{35pc}\label{d.aaabbb}
$N\ =\ (N-2)\ +\ 1\ +\ 1,\qquad
N\ =\ a_1\ +\ a_2\ +\ a_3,\qquad N\ =\ b_1\ +\ b_2\ +\ b_3,$
\end{minipage}\end{equation}
such that
\begin{equation}\begin{minipage}[c]{35pc}\label{d.oAp}
for every prime $p,$ at least one of the decompositions
of~\textup{(\ref{d.aaabbb})} is $\!p\!$-acceptable.
\end{minipage}\end{equation}

Then $N\ \geq\ 1726\ =\ 2^6\cdot 3^3\,-\,2.$

If $N$ is even, then in fact $N\ \geq\ 6910\ =\ 2^8\cdot 3^3\,-\,2.$

In either case, $N{-}\<1$ is divisible by at least $3$ distinct primes.
\end{proposition}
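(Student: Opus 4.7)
My plan is to combine Kummer's lemma (Lemma~\ref{L.Kummer}) with explicit size inequalities that the two unknown decompositions must satisfy. Since $\ch(N{-}\<2,\,1,\,1)=N(N{-}\<1)/2$, the lemma shows that the decomposition $(N{-}\<2,\,1,\,1)$ is $\!p\!$-acceptable exactly for primes $p\nmid N(N{-}\<1)$. Thus for every prime $p\mid N(N{-}\<1)$, one of $(a_1,a_2,a_3),(b_1,b_2,b_3)$ must be $\!p\!$-acceptable. The digit criterion then becomes concrete: if $p^k$ exactly divides $N$, $\!p\!$-acceptability forces $p^k$ to divide every summand; if $p^k$ exactly divides $N{-}\<1$, then one summand is $\equiv 1$ and the other two $\equiv 0$ modulo $p^k$.

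Next I would package these as a size estimate. For a decomposition $(x_1,x_2,x_3)$ acceptable at a set $T$ of primes of $N$ and a set $S$ of primes of $N{-}\<1$, set $M=\prod_{p\in T}p^{v_p(N)}$, and for each $p\in S$ let $i_p$ be the index of the summand $\equiv 1\pmod{p^{v_p(N-1)}}$. Grouping by $i_p$ and writing $P_j=\prod_{p\in S,\,i_p=j}p^{v_p(N-1)}$, we have $MP_jP_k\mid x_i$ whenever $\{j,k\}=\{1,2,3\}\setminus\{i\}$, so
\[
N \;\geq\; M\,(P_1P_2+P_1P_3+P_2P_3).
\]
Applied to $(a)$ and $(b)$ this yields parameters $(M_a,P_{i,a})$ and $(M_b,P_{i,b})$; the covering requirements $\r{lcm}(M_a,M_b)=N$ and $\r{lcm}(\prod_iP_{i,a},\,\prod_iP_{i,b})=N{-}\<1$ then constrain the joint choice, and in particular $M_aM_b\geq N$.

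For the three-distinct-primes claim I would argue by contradiction. If $N{-}\<1=p^e$, the decomposition handling $p$ has sum $\geq 2(N{-}\<1)+1>N$, absurd. If $N{-}\<1=p^aq^b$ and a single decomposition handles both primes, a short inspection of the possible distributions of $\{p^a,q^b\}$ across the three indices gives sum $\geq 2(N{-}\<1)+1$ or $\geq(N{-}\<1)+p^a+q^b$, both absurd. Otherwise $(a)$ handles only $p$ and $(b)$ only $q$, so $N\geq M_a(2p^a+1)$ and $N\geq M_b(2q^b+1)$; multiplying and using $M_aM_b\geq N$ gives $N\geq(2p^a+1)(2q^b+1)>4(N{-}\<1)$, also absurd.

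For the quantitative bounds $N\geq 1726$ (resp.\ $N\geq 6910$ when $N$ is even), I would push the same inequality further, now assuming $N{-}\<1$ has at least three distinct prime factors. Unbalanced partitions of a decomposition's prime-factor assignments still collapse to easy $N\geq\alpha(N{-}\<1)$ contradictions; the surviving configurations are balanced $1$-$1$-$1$ assignments, where $P_1P_2+P_1P_3+P_2P_3$ is of order $(N{-}\<1)^{2/3}$. Combining the two decomposition inequalities with $M_aM_b\geq N$ and with the bound $N\geq 3M_b$ that arises when $S_b=\emptyset$ should eliminate every candidate $N<1726$. For even $N$, the extra input is that $2\mid N$ forces any $\!2\!$-acceptable decomposition to have all three summands divisible by the $\!2\!$-part of $N$, and by~\textup{(\ref{d.2vals})} the three $\!2\!$-adic valuations must in fact be distinct; this enlarges the effective $M$ at $p=2$ and sharpens the bound to $6910$. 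The main obstacle is the bookkeeping across the finitely many distributions of primes between $(a),(b)$ and among the three indices, together with ad hoc elimination of the small residue of candidate $N$ that survive the asymptotic inequalities.
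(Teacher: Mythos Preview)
Your argument for the final clause (that $N-1$ must have at least three distinct prime factors) is correct and is essentially the paper's argument, just organized slightly differently.

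The gap is in the quantitative bounds $N\geq 1726$ and $N\geq 6910$. Your additive inequality $N\geq M\,(P_1P_2+P_1P_3+P_2P_3)$, combined via AM--GM with $M_aM_b\geq N$ and $\prod_i P_{i,a}\prod_i P_{i,b}\geq N-1$, yields only $N^3\geq 3^6(N-1)^2$, i.e.\ roughly $N\geq 727$. You acknowledge this by planning ``ad hoc elimination of the small residue of candidate $N$'', but that residue is the entire interval $[727,1725]$, and no amount of bookkeeping over prime-distributions will close it: the inequality you are using is simply too weak by a factor of $2^6/3^3$. The missing ingredient is a sharp analysis of the prime~$2$. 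The paper works with the \emph{product} $a_1a_2a_3\,b_1b_2b_3$: on one side $N^3(N-1)^2$ divides it, and by~(\ref{d.2vals}) the $2$-acceptable decomposition actually contributes at least one extra factor of~$2$ (three when $N$ is even), strengthening this to $2\,N^3(N-1)^2$ (resp.\ $8\,N^3(N-1)^2$). On the other side, the $2$-acceptable decomposition satisfies $a_1a_2a_3\leq N^3/2^5$ rather than $N^3/3^3$ --- this comes from a separate optimization argument over all carry-free base-$2$ triples summing to~$N$, maximizing $r_1r_2r_3/(r_1+r_2+r_3)^3$. Combining these gives $(N-1)^2\leq N^3/(2^6\cdot 3^3)$, hence $N\geq 2^6\cdot 3^3-2=1726$ directly, with no residual cases; and replacing $2$ by $8$ gives $6910$ for even~$N$.

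You invoke~(\ref{d.2vals}) only for even $N$, but it is equally needed when $N$ is odd (then $2\mid N-1$, and the $2$-acceptable decomposition still has summands with distinct $2$-valuations). More importantly, the $N^3/2^5$ upper bound on the product of a $2$-acceptable triple is an idea your outline does not contain, and without it you cannot reach the stated constants.
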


\begin{proof}
From the discussion in the last section, we see that
for every prime $p$ dividing $N$ or $N{-}\<1,$ one of the last
two decompositions of~(\ref{d.aaabbb}) must be $\!p\!$-acceptable.
Thus, if $N$ is divisible by $p^d,$ then
looking at the last $d$ digits of the base-$\!p\!$ expression of $N$
in the light of Definition~\ref{D.accept}, we see that
$p^d$ must divide all three summands in one of those two decompositions;
i.e., either all three of $a_1,\,a_2,\,a_3$ or all
three of $b_1,\,b_2,\,b_3.$
Hence, $N^3\,|\,a_1\,a_2\,a_3\,b_1\,b_2\,b_3.$
In the same way, we see that each prime power factor of $N{-}\<1$ must
either divide two of $a_1,\,a_2,\,a_3$ or two of $b_1,\,b_2,\,b_3,$
hence $(N{-}\<1)^2\,|\,a_1\,a_2\,a_3\,b_1\,b_2\,b_3.$
Since $N$ and $N{-}\<1$ are relatively prime, this gives
\begin{equation}\begin{minipage}[c]{35pc}\label{d.N^3(N-1)^2}
$N^3(N{-}\<1)^2\ |\ a_1\,a_2\,a_3\,b_1\,b_2\,b_3.$
\end{minipage}\end{equation}

On the other hand, it is easy to verify that for any positive real
number, the decomposition into three nonnegative summands having the
largest product is the one in which each summand is one third of the
total; so $a_1\,a_2\,a_3\leq(N/3)(N/3)(N/3);$ and likewise for the
$b_i:$

\begin{equation}\begin{minipage}[c]{35pc}\label{d.N^3/27}
$a_1\,a_2\,a_3\ \leq\ N^3/\,3^3,\qquad b_1\,b_2\,b_3\ \leq\ N^3/\,3^3.$
\end{minipage}\end{equation}

At this point, we could combine~(\ref{d.N^3(N-1)^2})
and~(\ref{d.N^3/27}) to get a lower bound on $N.$
But let us first strengthen each of~(\ref{d.N^3(N-1)^2})
and~(\ref{d.N^3/27}) a bit, using
some special considerations involving the prime $2.$
That prime necessarily divides $N(N{-}\<1);$ assume
without loss of generality that $a_1+a_2+a_3$ is $\!2\!$-acceptable.
Then by~(\ref{d.2vals}), the powers of $2$
dividing $a_1,\ a_2$ and $a_3$ are distinct.
If $2\,|\,N,$ occurring, say, to the $\!d\!$-th power,
this means that $a_1\,a_2\,a_3$ must be divisible not merely by
the factor $2^d\,2^d\,2^d=2^{3d}$ implicit in our derivation
of~(\ref{d.N^3(N-1)^2}), but by $2^d\,2^{d+1}\,2^{d+2}=2^{3d+3}.$
If, rather $2\,|\,N{-}\<1,$
again, say, to the $\!d\!$-th power, we can merely
say that $a_1,\ a_2$ and $a_3$ include, along with one odd term,
terms divisible by $2^d$ and $2^{d+1},$ giving a divisor $2^{2d+1}$ in
place of the $2^{2d}$ implicit in~(\ref{d.N^3(N-1)^2}).
Thus, we can improve~(\ref{d.N^3(N-1)^2}) to
\begin{equation}\begin{minipage}[c]{35pc}\label{d.more_2}
$2\,N^3(N{-}\<1)^2\ |\ a_1\,a_2\,a_3\,b_1\,b_2\,b_3,$\quad and if
$N$ is even,\quad
$8\,N^3(N{-}\<1)^2\ |\ a_1\,a_2\,a_3\,b_1\,b_2\,b_3.$
\end{minipage}\end{equation}

To improve~(\ref{d.N^3/27}), on the other
hand, consider three real numbers
\begin{equation}\begin{minipage}[c]{35pc}\label{d.r1r2r3}
$r_1\ \geq\ r_2\ \geq\ r_3$
\end{minipage}\end{equation}
(which we shall assume given with specified
base-$\!2\!$ expressions, so that, e.g., $1.000\dots$ and $0.111\dots$
are, for the purposes of this discussion, distinct),
subject to the condition that
\begin{equation}\begin{minipage}[c]{35pc}\label{d.nocarry}
there is no need to carry when $r_1,\,r_2,\,r_3$ are added in base~$2;$
\end{minipage}\end{equation}
and suppose we want to know how large the number
\begin{equation}\begin{minipage}[c]{35pc}\label{d.ratio}
$r_1\,r_2\,r_3\,/\,(r_1+r_2+r_3)^3$
\end{minipage}\end{equation}
(regarded as a real number, without distinguishing between
alternative base-$\!2\!$ expansions if these exist) can be.
Note that if, in one of the $r_i,$ we change a base-$\!2\!$ digit $1,$
other than the highest such digit, to $0,$ i.e., subtract $2^d$ for
appropriate $d,$ and simultaneously, for some $j>i$
(cf.~(\ref{d.r1r2r3}))
add $2^d$ to $r_j$ (change the corresponding
digit, which was $0$ by~(\ref{d.nocarry}), to $1),$ then,
proportionately, the decrease in $r_i$ will be less than the increase
in $r_j.$
From this it is easy to deduce that for fixed $r_1+r_2+r_3,$ we
will get the largest possible value for~(\ref{d.ratio}) by
letting $r_1$ contain only the largest digit $1$ of
that sum, $r_2$ only the second largest, and $r_3$ everything else.
(To make this argument rigorous, we have to know that~(\ref{d.ratio})
assumes a largest value.
We can show this by regarding the set of $\!3\!$-tuples of strings
of $\!1\!$'s and $\!0\!$'s, with a
specified number of these to the left of
the decimal point and the rest to the right, and with no two
members of our $\!3\!$-tuple having $\!1\!$'s in the same position,
as a compact topological space under the product topology.
We can then interpret~(\ref{d.ratio}) as a continuous real-valued
function on that space, and conclude that it attains a maximum.)

More subtly, I claim that if some digit of $r_1+r_2+r_3$ after the
leading $1$ is $0,$ then the value~(\ref{d.ratio}) will be increased on
replacing that digit by $1$ in $r_3,$ and hence in $r_1+r_2+r_3.$
This can be deduced using the fact that the partial derivative
of~(\ref{d.ratio}) with respect to $r_3$ is positive, together
with some ad hoc considerations in the case where the digit
in question has value $>r_2.$
Since~(\ref{d.ratio}) is invariant under multiplication of
all $r_i$ by a common power of $2$ (``shifting the decimal
point''), it is not hard to deduce that it is
maximized when $r_1,\ r_2$ and $r_3$
have base-$\!2\!$ expansions $1_2,\ 0.1_2$ and $0.0111\dots\<_2\,.$
In that case, its value is $(1\cdot 1/2\cdot 1/2)/(1+1/2+1/2)^3=2^{-5}.$
Thus we can improve the first inequality of~(\ref{d.N^3/27}) to
\begin{equation}\begin{minipage}[c]{35pc}\label{d.N^3/32}
$a_1\,a_2\,a_3\ \leq\ N^3/\,2^5.$
\end{minipage}\end{equation}
(We cannot similarly improve the second inequality, since
the decomposition $b_1+b_2+b_3$ need not be $\!2\!$-acceptable,
i.e., need not satisfy the analog of~(\ref{d.nocarry}).)

If we now combine~(\ref{d.N^3/27}), so improved, with
the first assertion of~(\ref{d.more_2}), we get
\begin{equation}\begin{minipage}[c]{35pc}\label{d.N...leq}
$2\,N^3(N{-}\<1)^2\ \leq\ (N^3/2^5)(N^3/3^3).$
\end{minipage}\end{equation}
So
\begin{displaymath}\begin{minipage}[c]{33pc}
$(N{-}\<1)^2\ \leq\ N^3/\,(2^6\cdot 3^3),$\quad so\\[.5em]
$N^3/(N{-}\<1)^2\ \geq\ 2^6\cdot 3^3.$
\end{minipage}\end{displaymath}

Expanding the left-hand side in powers of $N{-}\<1,$ we get
$(N{-}\<1)\ +\ 3,$ plus terms whose
sum becomes $<1$ as soon as $N\geq 5.$
Since the above inequality certainly cannot be satisfied by
any integer $N$ with $1<N<5,$ we can discard those terms, getting
\begin{displaymath}\begin{minipage}[c]{33pc}
$N+\,2\ \geq\ 2^6\cdot 3^3.$
\end{minipage}\end{displaymath}

This gives the first assertion of the proposition.
When $N$ is even, we use the second inequality of~(\ref{d.more_2})
in place of the first, getting the second assertion.

To prove the final assertion, suppose $N{-}\<1$ were the product of
two prime powers.
(For brevity, we consider this to include the case where
$N{-}\<1$ is itself a prime power, putting in
a dummy second prime power $1.)$
Then each of these would either divide
two summands in the decomposition $N=a_1+a_2+a_3,$ or
two summands in the decomposition $N=b_1+b_2+b_3.$
If they each divided two summands in the same decomposition,
then they would both divide at least one of those
summands, making that summand $\geq N{-}\<1,$
which is impossible if the three terms are to sum to $N.$
So instead, one prime-power divisor of $N{-}\<1,$ which we shall write
$r,$ must divide two terms of $a_1+a_2+a_3,$ and the other, which
we shall write $s,$ must divide two terms of $b_1+b_2+b_3.$
Moreover, since every prime power dividing $N$ divides
either all of $a_1,\ a_2,\ a_3$ or all of $b_1,\ b_2,\ b_3,$
we can write $N=tu$ where $t$ divides all of the former
and $u$ divides all of the latter.
Now since $a_1+a_2+a_3$ has all terms divisible by $t$ and
at least two divisible by $r,$ and sums to $N,$ we
have $N>2rt;$ and similarly we have $N>2su.$
Multiplying, we get $N^2>4\,r\,s\,t\,u=4N(N{-}\<1),$ which is
impossible.
\end{proof}

Remark: If one tries to extend the argument of the last paragraph
above to the case where $N{-}\<1$ is a product of three prime powers,
one discovers one case which there is no obvious way to exclude:
One of our given decompositions, say $N=a_1+a_2+a_3,$ might
be $\!p\!$-acceptable for all three of those primes,
with one prime power factor dividing $a_1$ and $a_2,$
another dividing $a_1$ and $a_3,$
and the third dividing $a_2$ and $a_3.$
The product $t$ of those prime power factors of $N$ that divide all of
$a_1,\ a_2,\ a_3$ could then be nontrivial, though it would have to
be smaller than each of the three prime power factors of $N{-}\<1.$

\section{The case $i+j>2.$}\label{S.i+j>2}

Let us now consider a possible counterexample to Conjecture~\ref{Cj} for
$k=3$ of the contrary sort, where the member of our
triad of decompositions involving a summand $\geq p_{\max}^d,$
\begin{equation}\begin{minipage}[c]{35pc}\label{d.N-i-j_rep}
$N\ =\ (N{-}\<i\<{-}\<j)+i+j$
\end{minipage}\end{equation}
has at least one of the remaining summands, $i$ and $j,$
greater than $1.$
We will find it most convenient to begin by assuming
we are given only the other two decompositions in our triad,
\begin{equation}\begin{minipage}[c]{35pc}\label{d.aaabbb2}
$N\ =\ a_1\ +\ a_2\ +\ a_3,\qquad N\ =\ b_1\ +\ b_2\ +\ b_3,$
\end{minipage}\end{equation}
and develop results on such a pair of decompositions, from
which we will subsequently obtain constraints on the
decomposition~(\ref{d.N-i-j_rep}).

The primes dividing $N(N{-}\<1)(N{-}\<2)$ that will eventually
have to be taken care of by the decomposition~(\ref{d.N-i-j_rep})
are given a name in
\begin{definition}\label{D.relevant}
Given two decompositions\textup{~(\ref{d.aaabbb2})} of an
integer $N$ into positive integer summands, we shall
call a prime $p$ {\em relevant}
\textup{(}with respect to\textup{~(\ref{d.aaabbb2}))} if it divides
$N(N{-}\<1)(N{-}\<2),$ but neither of the
decompositions\textup{~(\ref{d.aaabbb2})} is $\!p\!$-acceptable.

If $p$ is a relevant prime, and $p^d$ $(d>0)$ a prime power factor
\textup{(}in the sense defined at the beginning of
the preceding section\textup{)}
of any of $N,\ N{-}\<1$ or $N{-}\<2,$ we will call $p^d$ a
{\em relevant prime power.}
\end{definition}

To measure the impact of the relevant primes in the estimates to
be made, we define
\begin{equation}\begin{minipage}[c]{35pc}\label{d.C}
$C\ =$ the product of all relevant prime power factors of $N{-}\<2,$
the squares of all relevant prime power factors of $N{-}\<1,$
and the cubes of all relevant prime power factors of $N.$
\end{minipage}\end{equation}
(Note that if $2$ is a relevant prime and $N$ is even, then the
computation of $C$ will involve both the power of $2$ dividing
$N{-}\<2,$ and the cube of the power of $2$ dividing $N.)$

The next result, using ideas similar to those of the
preceding section, shows that $C$ must be fairly large.

\begin{lemma}\label{L.C}
In the situation of\textup{~(\ref{d.aaabbb2})}
and~\textup{(\ref{d.C})}, one always has
\begin{equation}\begin{minipage}[c]{35pc}\label{d.C>3^6}
$C\ >\ 3^6\,(1-4\,N^{-1}).$
\end{minipage}\end{equation}
\textup{(}In particular if $N\geq 12,$ then $C>486,$ and
if $N\geq 81,$ then $C>693.)$

If, moreover, $2$ is not a relevant prime \textup{(}with
respect to\textup{~(\ref{d.aaabbb2}))}, then
\begin{equation}\begin{minipage}[c]{35pc}\label{d.C>3^3_2^6}
$C\ >\ 3^3\cdot 2^6\,(1-4\,N^{-1}).$
\end{minipage}\end{equation}
\end{lemma}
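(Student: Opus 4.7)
The plan is to bound $P:=a_1a_2a_3\,b_1b_2b_3$ from above and from below and extract a lower bound on $C$. Introduce the auxiliary quantity
\[
C'\ :=\ \frac{N^3(N-1)^2(N-2)}{C},
\]
built from the non-relevant prime power factors of $N,\,N-1,\,N-2$ with the same multiplicities $3,\,2,\,1$ appearing in~(\ref{d.C}). My first goal is the divisibility $C'\mid P$; combined with the AM-GM bound $P\le N^6/3^6$ this will yield~(\ref{d.C>3^6}), and supplementing it with the two refinements at $p=2$ already used in the proof of Proposition~\ref{P.(N-2)+1+1} will yield~(\ref{d.C>3^3_2^6}).

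To establish $C'\mid P$, fix a non-relevant prime $p$ and, by symmetry, assume the $a$-decomposition is $p$-acceptable. The digit-sum criterion of Definition~\ref{D.accept} then forces, for every prime power factor $p^d$: if $p^d\mid N$, each $a_i$ is divisible by $p^d$; if $p^d\mid N-1$, two of the $a_i$ are divisible by $p^d$; and if $p^d\mid N-2$ with $p$ odd, at least one $a_i$ is divisible by $p^d$ --- the last base-$p$ digit of $N$ is $2$, and the only acceptable digit patterns at position~$0$ are $(2,0,0)$ or $(1,1,0)$, each placing at least one summand in $p^d\mathbb{Z}$. So the contribution of $p$ to $C'$ divides $a_1a_2a_3$, hence $P$. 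Since $N,\,N-1,\,N-2$ share no prime except possibly $p=2$ (when $N$ is even), multiplying over non-relevant $p$ yields $C'\mid P$; the $p=2$ overlap is handled by one combined base-$2$ reading of the $a$-decomposition. Feeding $P\le N^6/3^6$ into $C'\mid P$ then gives
\[
C\ \ge\ 3^6\,\frac{(N-1)^2(N-2)}{N^3}\ =\ 3^6\left(1-\tfrac{1}{N}\right)^{2}\!\left(1-\tfrac{2}{N}\right),
\]
and a one-line expansion verifies that this exceeds $3^6(1-4/N)$ for all $N\ge1$, giving~(\ref{d.C>3^6}).

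For~(\ref{d.C>3^3_2^6}), assume $2$ is not relevant, and take the $a$-decomposition to be $2$-acceptable. Two independent improvements, both established inside the proof of Proposition~\ref{P.(N-2)+1+1}, are available: (i) the base-$2$ maximization argument shows $a_1a_2a_3\le N^3/2^5$, improving the upper bound to $P\le N^6/(3^3\cdot2^5)$; and (ii) by~(\ref{d.2vals}) the valuations $v_2(a_1),\,v_2(a_2),\,v_2(a_3)$ are distinct, producing an extra factor of $2$ on the divisibility side, i.e.\ $2\,C'\mid P$. Combining, $C'\le N^6/(3^3\cdot2^6)$, and dividing $N^3(N-1)^2(N-2)$ by this bound gives~(\ref{d.C>3^3_2^6}).

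The main point of technical care is verifying~(ii) when $N$ is even, because then $2$ contributes to $C'$ both as $2^{3v_2(N)}$ (from $N$) and as $2^{v_2(N-2)}$ (from $N-2$), and one must absorb both at once with one power of $2$ to spare. A short split on whether $v_2(N)=1,\ v_2(N-2)\ge2$ or $v_2(N)\ge2,\ v_2(N-2)=1$, combined with reading off the forced low-order digits of the $a_i$ and imposing distinctness of the $v_2(a_i)$, gives $v_2(a_1a_2a_3)\ge 3v_2(N)+v_2(N-2)+1$ in both sub-cases; the odd-$N$ case, where only $2\mid N-1$, is uniform and simpler.
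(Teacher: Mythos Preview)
Your argument is correct and follows essentially the same route as the paper: establish the divisibility $N^3(N{-}1)^2(N{-}2)/C \mid a_1a_2a_3\,b_1b_2b_3$ via the digit-sum criterion, combine with the AM--GM bound $a_1a_2a_3\,b_1b_2b_3 \le (N/3)^6$, and for the sharper inequality apply the two base-$2$ refinements from Proposition~\ref{P.(N-2)+1+1} (the bound $a_1a_2a_3\le N^3/2^5$ and the extra factor of $2$ from distinctness of $2$-adic valuations), with the same case split on $v_2(N)$ versus $v_2(N{-}2)$ when $N$ is even. The only organizational difference is that the paper first proves~(\ref{d.C>3^6}) in the easy cases ($2$ relevant, or $N$ odd) and then recovers it in the remaining case as a consequence of~(\ref{d.C>3^3_2^6}), whereas you handle the $p=2$ overlap up front and prove~(\ref{d.C>3^6}) uniformly; the underlying computations are identical.
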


\begin{proof}
Let us write $C_i$ for the product of the
relevant prime powers dividing $N{-}\<i$ $(i=0,\,1,\,2).$
Then we see (from the definition of $\!p\!$-acceptability)
that each prime power dividing $N/C_0$ will either divide
all of $a_1,\,a_2,\,a_3$ or all of $b_1,\,b_2,\,b_3,$
each prime power dividing $(N{-}\<1)/C_1$ will either divide
two of $a_1,\,a_2,\,a_3$ or two of $b_1,\,b_2,\,b_3,$ and
each prime power dividing $(N{-}\<2)/C_2$ will either divide
at least one of $a_1,\,a_2,\,a_3$ or at least one of $b_1,\,b_2,\,b_3.$
Hence $a_1\,a_2\,a_3\,b_1\,b_2\,b_3$ will be divisible by
$(N{-}\<2)/C_2,$ by $(N{-}\<1)^2/C_1^2,$ and by $N^3/C_0^3.$
Moreover, the only prime that can divide more than one of three
successive integers is $2,$ so $a_1\,a_2\,a_3\,b_1\,b_2\,b_3$
will be divisible by the product of these integers,
$N^3(N{-}\<1)^2(N{-}\<2)/C,$ possibly
adjusted by an appropriate power of $2.$

That adjustment will not be needed if $2$ is a relevant prime,
since in that case, by definition
of the $C_i,$ those remove all divisors $2$ from our expression.
It also will not come in if $N$ is odd,
since then only one of $N{-}\<2,\ N{-}\<1,\ N,$ namely $N{-}\<1,$
is divisible by $2.$
So in both of those cases we have
\begin{equation}\begin{minipage}[c]{35pc}\label{d.N^3(N-1)^2(N-2)}
$N^3(N{-}\<1)^2(N{-}\<2)\,/\,C\ |\ a_1\,a_2\,a_3\,b_1\,b_2\,b_3.$
\end{minipage}\end{equation}
Combining this with~(\ref{d.N^3/27}), we get, in these cases
\begin{equation}\begin{minipage}[c]{35pc}\label{d.N^6/3^6}
$N^3(N{-}\<1)^2(N{-}\<2)\,/\,C\ \leq\ N^6/\,3^6,$
\end{minipage}\end{equation}
that is,
\begin{equation}\begin{minipage}[c]{35pc}\label{d.Cgeq}
$C\ \geq\ 3^6\,(1-N^{-1})^2(1-2\,N^{-1}).$
\end{minipage}\end{equation}
(We cannot improve~(\ref{d.N^3/27}) using~(\ref{d.N^3/32}) in
this calculation: the argument that previously allowed us to do so
only applies if $2$ is not a relevant prime.)
When we expand the product of the last two factors in~(\ref{d.Cgeq}),
we see that the $N^{-2}$ term has coefficient $+5$ and the $N^{-3}$ term
coefficient $-2,$ so their sum is positive for all $N\geq 1,$
and we may drop those terms, getting~(\ref{d.C>3^6}) under these
conditions; in particular, whenever $2$ is a relevant prime.

We now consider the case where $2$ is not a relevant prime.
Say the decomposition $a_1+a_2+a_3$ is $\!2\!$-acceptable.
Then we can, as in the proof of Proposition~\ref{P.(N-2)+1+1},
replace the first inequality of~(\ref{d.N^3/27}) by~(\ref{d.N^3/32}),
and so improve our upper bound on $a_1\,a_2\,a_3\,b_1\,b_2\,b_3$
from~$N^6/3^6$ to~$N^6/3^3\cdot 2^5.$

Now if $N$ is odd, we have noted that we still have~(\ref{d.N^6/3^6});
moreover, as in the proof of Proposition~\ref{P.(N-2)+1+1}, we can
use~(\ref{d.2vals}) to put a
factor of $2$ on the left side of that inequality
(since if $N{-}\<1$ is divisible by $2^d,$ then
$a_1\,a_2\,a_3$ will be divisible by $2^d\,2^{d+1}).$
Combining this with the modification of the right-hand
side indicated in the preceding paragraph,
we get~(\ref{d.C>3^3_2^6}) for such $N.$

When $N$ is even, we must analyze more closely the relation between
the powers of $2$ dividing $N^3(N{-}1)^2(N{-}\<2)$ and $a_1\,a_2\,a_3.$
Exactly one of $N$ and $N{-}\<2$ will be divisible by $2^d$
for some $d>1.$
Assume first that $2^d\,|\,N.$
Then the power of $2$ dividing $N^3(N{-}1)^2(N{-}\<2)$ is
$2^{3d}\cdot 2^0\cdot 2^1=2^{3d+1},$
while the power dividing $a_1\,a_2\,a_3$
will be at least $2^d\cdot 2^{d+1}\cdot 2^{d+2}=2^{3d+3},$
giving {\em two} extra powers of $2,$ and hence an inequality
that is in fact stronger than~(\ref{d.C>3^3_2^6}).
If, rather, $2^d\,|\,N{-}\<2,$ then the power
of $2$ dividing $N^3(N{-}1)^2(N{-}\<2)$ will
be $2^3\cdot 2^0\cdot 2^d=2^{d+3},$
while the power dividing $a_1\,a_2\,a_3$
will be at least $2^1\cdot 2^d\cdot 2^{d+1}=2^{2d+2}\geq 2^{d+4},$
which provides a single extra factor of $2,$
and so again gives~(\ref{d.C>3^3_2^6}).
\end{proof}

We now apply the above to a possible counterexample to
Conjecture~\ref{Cj}.

\begin{proposition}\label{P.leq10}
Suppose a positive integer $N$ admits three decompositions, which we
will write~\textup{(\ref{d.N-i-j_rep})} and~\textup{(\ref{d.aaabbb2})},
such that for every prime $p$ dividing $N(N{-}\<1)(N{-}\<2),$ at
least one of these decompositions is $\!p\!$-acceptable.
Then we cannot have $2<i+j<11.$
\end{proposition}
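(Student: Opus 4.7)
The plan is to combine the lower bound on $C$ from Lemma~\ref{L.C} with an upper bound derived from the requirement that $(N-i-j,\,i,\,j)$ be $p$-acceptable for every relevant prime~$p$.  First I would analyze, for each relevant prime~$p$ with exact power~$p^d$ dividing whichever of $N,\,N-1,\,N-2$ contains it, what $p$-acceptability of $(N-i-j,\,i,\,j)$ demands.  If $p^d$ divides~$N$, the first $d$ base-$p$ digits of $N$ are all zero, forcing $p^d$ to divide each of the three summands, and in particular $p^d \mid \gcd(i,j)$.  If $p^d$ divides $N-1$, exactly two of the three summands must be divisible by~$p^d$ and the third congruent to~$1 \pmod{p^d}$; this gives three ``types,'' under which $p^d$ divides one of $\gcd(i,j)$, $\gcd(i,j-1)$, or $\gcd(j,i-1)$.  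If $p^d$ divides $N-2$ with $p$ odd, six analogous types arise (from the six ways to split the position-$0$ digit-sum $2$ as $2{+}0{+}0$ or $1{+}1{+}0$), each forcing $p^d \mid \gcd(i-s,j-t)$ for some $s,t \geq 0$ with $s+t=2$; a similar analysis handles $p=2$ dividing $N-2$.

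Since any prime divides at most one of three consecutive integers (the sole exception being that $p=2$ may divide both $N$ and $N-2$), I then combine these per-prime contributions to bound $C = C_0^3\,C_1^2\,C_2$ in terms of $(i,j)$ only.  A key observation about~$2$: if the base-$2$ expansions of $i$ and $j$ share a common $1$-bit, then the digit-sum in $(N-i-j,\,i,\,j)$ at that position is already at least~$2$, so the decomposition is never $2$-acceptable; hence $2$ is not a relevant prime and the stronger bound $C > 3^3\cdot 2^6\,(1-4/N)$ of Lemma~\ref{L.C} applies.  Among pairs with $2<i+j\leq 10$, only $(1,2),\,(1,4),\,(1,6),\,(1,8),\,(2,4),\,(2,5),\,(2,8),\,(3,4)$ (and their transpositions) have disjoint binary expansions; all other pairs automatically inherit the stronger lower bound.

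Third, I would verify case-by-case that, for each of the finitely many pairs $(i,j)$ with $2<i+j\leq 10$, the upper bound on $C$ obtained above falls strictly below the appropriate lower bound from Lemma~\ref{L.C}, provided $N\geq 12$ (which ensures $1-4/N\geq 2/3$).  The remaining values $N\leq 11$ are handled by direct inspection: each such $N$ admits only a few decompositions into three positive summands, whose trinomial coefficients are easily seen via Lemma~\ref{L.Kummer} to share a common prime factor.  The main obstacle will be the delicate analysis at $p=2$ for borderline pairs such as $(1,8)$, where the $2$-part of $C$ can be sizable and the upper bound approaches the lower bound; the saving observation is that a single prime power $2^d$ cannot be the exact $2$-part of more than one of $N,\,N-1,\,N-2$, so the $2$-part contributions to $C_0,\,C_1,\,C_2$ cannot simultaneously be maximal, which keeps the total bound beneath the threshold.
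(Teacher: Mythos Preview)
Your plan is essentially the paper's own argument, reorganized: both bound the relevant prime powers in terms of $i$ and $j$, compute the resulting maximum possible $C$, and contradict Lemma~\ref{L.C}.  The paper first extracts the clean inequality $p^d\leq i+j-1\leq 9$ (its display~(\ref{d.p^d+1})) and then runs the case analysis prime-by-prime, whereas you run it pair-by-pair over the finitely many $(i,j)$ with $2<i+j\leq 10$; the two organizations amount to the same finite check.  Your observation that $2$ cannot be relevant when $i$ and $j$ share a binary digit is exactly the ``no carrying'' half of the paper's condition~(\ref{d.leq2}) specialized to $p=2$, and your list of binary-disjoint pairs is correct.  One small slip: in the $p^d\mid N-2$ analysis the six types give $s+t\leq 2$, not $s+t=2$ (e.g.\ $i\equiv j\equiv 0$ is allowed), though this does not affect the plan.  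For small $N$ the paper uses~(\ref{d.sum<k}) to dispose of $N\leq 14$ in one line rather than inspect cases, but your direct check of $N\leq 11$ is also fine.
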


\begin{proof}
Below, ``relevant'' will mean relevant
with respect to~(\ref{d.aaabbb2}).
We claim first that
\begin{equation}\begin{minipage}[c]{35pc}\label{d.leq2}
If $p^d$ is a relevant prime power, then there is no carrying when
$N{-}\<i\<{-}\<j,\ i$ and $j$
are added in base~$p,$ and the remainders on dividing
$i$ and $j$ by $p^d$ sum to at most $2.$
\end{minipage}\end{equation}
Indeed, the first assertion follows from our hypothesis, which
implies that the decomposition~(\ref{d.N-i-j_rep})
is $\!p\!$-acceptable for every relevant prime $p.$
Combining this with the fact that a relevant prime power $p^d$ is by
definition a divisor of $N,\ N{-}\<1$ or $N{-}\<2,$
hence that $N\equiv 0,\,1$ or $2\pmod{p^d},$
we get the second assertion.

Let us now, by way of contradiction,
\begin{equation}\begin{minipage}[c]{35pc}\label{d.2<11}
assume that $2<i+j<11.$
\end{minipage}\end{equation}
Then $i$ and $j,$ though positive, are not both $1;$ so for
$p$ as in~(\ref{d.leq2}), we see from the second assertion
thereof that one of $i$ or $j$ must be $\geq p^d.$
Thus,
\begin{equation}\begin{minipage}[c]{35pc}\label{d.p^d+1}
if $p^d$ is a relevant prime power, $i+j\ \geq\ p^d+1.$
\end{minipage}\end{equation}
This and the upper bound of~(\ref{d.2<11}) limit the
possible relevant prime powers to
\begin{equation}\begin{minipage}[c]{35pc}\label{d.2-9}
$2,\ 3,\ 4,\ 5,\ 7,\ 8,\ 9.$
\end{minipage}\end{equation}

Now the hypotheses of the proposition cannot be satisfied for
any $N\leq 14.$
Indeed, for each such value, $N$ or $N{-}\<1$
is a prime power $p^d,$ hence $N$ has digit-sum $\leq 2$
to base $p,$ so for such $p$ there can be no $\!p\!$-acceptable
decomposition of $N$ (cf.~(\ref{d.sum<k}),~(\ref{d.25_<100})).
Hence we may assume $N>14,$ and so use the bound
\begin{equation}\begin{minipage}[c]{35pc}\label{d.486}
$C>486$
\end{minipage}\end{equation}
of Lemma~\ref{L.C}.

Let us now consider a relevant prime
power $p_0^d$ dividing $N$ itself.
In that case,~(\ref{d.leq2}) implies
that $p_0^d$ must divide both $i$ and $j,$ so~(\ref{d.2<11})
limits us to $p_0^d=2,\ 3,\ 4,\ 5.$
Of these values, $4$ is excluded, since the condition that there be no
carrying when $i$ and $j$ are added in base $p=2$ shows that if
$i,\ j$ are divisible by $4,$ one of them is divisible by $8,$
making their sum at least $12;$ we are left with $p_0^d=2,\ 3,\ 5.$
If $p_0^d=3$ or $5,$ then the second statement of~(\ref{d.leq2}),
together with the divisibility of $i$ and $j$ by $p_0^d,$
and~(\ref{d.2<11}), force $i=j=p_0.$
But this has the consequence that there can
be no relevant prime
(whether dividing $N,\ N{-}\<1$ or $N{-}\<2)$
{\em other} than $p_0:$ indeed, for these choices
of $i$ and $j,$ each of the other prime powers in the list~(\ref{d.2-9})
is eliminated by at least one of the conditions of~(\ref{d.leq2}).
This gives $C=p_0^3\leq 125,$ contradicting~(\ref{d.486}).
Likewise, if $p_0^d=2,$ then the combination
of the first condition of~(\ref{d.leq2})
and our bound on $i+j$ excludes all possibilities but
$\{i,j\}=\{2,4\}$ and $\{2,8\}.$
Neither of these choices is consistent
with~(\ref{d.leq2}) holding for any of our odd prime powers.
So in this case $C\leq 8\cdot 2^3=64,$ again
contradicting~(\ref{d.486}).
These contradictions show that no relevant primes divide $N.$

Knowing this, let us again look through the possible relevant primes.
If $7$ is relevant, then by~(\ref{d.leq2}) and~(\ref{d.2<11}),
the unordered pair
$\{i,j\}$ must be one of $\{7,1\},$ $\{7,2\}$ or $\{8,1\}.$
Applying~(\ref{d.leq2}) to the other prime powers in~(\ref{d.2-9}),
we see that for $\{i,j\}=\{7,1\},$ the only other relevant prime power
could be a $3,$ in which case the final digit of $N$ to base $3$ would
be $2,$ so that $3$ would divide $N{-}\<2,$ making
$C\leq 3\cdot 7^2=147.$
For $\{i,j\}=\{7,2\},$ there are no other possible relevant prime
powers; and
for $\{i,j\}=\{8,1\},$ we could at most have an $8$ dividing $N{-}\<1,$
which, since $7$ would divide $N{-}\<2,$ gives $C\leq 7\cdot 8^2=448;$
in each case contradicting~(\ref{d.486}).
If $5$ is relevant, the possibilities for $\{i,j\}$ are
$\{5,1\},$ $\{5,2\},$ $\{6,1\}$ and $\{5,5\}.$
Looking at the cases where $i$ and/or $j$ is $5,$ we see that the
only other possible relevant prime power
consistent with~(\ref{d.leq2}) is $2,$ which could divide
$N{-}\<1$ in the one case $\{5,2\},$ giving $C\leq 2^2\cdot 5^2=100;$
while in the case $\{6,1\},$ we get $2$ and $3$ as possible
relevant prime power factors of $N{-}\<1,$ which, together with $5$
dividing $N{-}\<2,$ would give $C\leq 5\cdot 2^2\cdot 3^2=180;$
again each case contradicts~(\ref{d.486}).
So at most $2$ and $3$ can be relevant primes.
If $9$ were a relevant prime power, then
by~(\ref{d.2<11}) we could only
have $\{i,j\}=\{9,1\},$ so $2$ would not be relevant,
and $C$ would be at most $9^2=81.$
Likewise, if $8$ were a relevant prime power, then we could only
have $\{i,j\}=\{8,1\},$ or $\{8,2\},$ so $3$ would not be relevant,
and $C$ would be at most $8^2=64.$
So the relevant prime powers are at most $3$ and $4,$ each
occurring in the expression for $C$ to at most the second
power, giving $C\leq 144,$ and yielding the same contradiction.
\end{proof}

We remark
that the argument just given cannot be extended to exclude $i+j=11.$
Indeed, for $N$ of the form $180\,M+11,$ consider the decomposition
\begin{equation}\begin{minipage}[c]{35pc}\label{d.180}
$N =\ 180\,M\ +\ 10\ +\ 1.$
\end{minipage}\end{equation}
I claim that for infinitely many values of $M,$ the above
decomposition of $N$ is $\!2\!$-, $\!3\!$- and $\!5\!$-acceptable,
with $2\cdot 5\,|\,N{-}\<1$ and $3^2\,|\,N{-}\<2.$
Indeed, we see that the conditions for $\!2\!$-, $\!3\!$- and
$\!5\!$-acceptability of~(\ref{d.180}) are that the base-$\!2\!$
expansion of $180\,M$ have $\!2^3\!$-digit zero,
that its base-$\!3\!$ expansion have $3^2$ digit $0$ or $1,$
and that its base-$\!5\!$ expansion have $5^1$ digit $0,\ 1$ or $2;$
so these are satisfied $(1/2)\,(2/3)\,(3/5)=1/5$ of the time.
(The first few values satisfying these conditions
are $M=5,\,12,\,17.)$
Thus, if we combine~(\ref{d.180}) with two other decompositions
of $N,$ the latter need not be $\!2\!$-, $\!3\!$- or $\!5\!$-acceptable,
so we can get $C=2^2\cdot 3^2\cdot 5^2=900,$ which no longer
contradicts~(\ref{d.C>3^6}).
(Here $2$ {\em is} a relevant prime, so we cannot
use the stronger conclusion~(\ref{d.C>3^3_2^6}).)
I suspect that as we take still larger values of $i+j,$ we can get
arbitrarily large $C.$

However, the bound of Proposition~\ref{P.leq10} is enough
to eliminate a large range of values of $N,$ as we shall now show.

\section{There are no counterexamples with $k=3,\ N<785.$}\label{S.785}

Propositions~\ref{P.(N-2)+1+1} and~\ref{P.leq10} together show us that
in looking for counterexamples to Conjecture~\ref{Cj} with $k=3$
and $N<1726,$ it suffices to check values of $N$
which exceed by at least $11$ the largest prime power $\leq N.$
In particular, $N$ must lie in a gap of length at least $12$ between
successive prime powers.
A search through a list of primes shows $17$ gaps of length $\geq 12$
with the lower prime less than $1000,$
their lengths ranging from $12$ to $20.$
Several of these are thrown out of the picture when we bring
higher prime powers into consideration.
(The length-$\!14\!$ gap between $113$ and $127$
is interrupted by $121$ and $125,$
the one between $953$ and $967$ by $31^2=961,$ and
the length-$\!12\!$ gaps between $509$ and $521,$
and between $619$ and $631,$ by $2^9=512$ and $5^4=625$ respectively.)
A couple of other gaps are shortened from greater lengths down
to length $12$ in this way.
(The one between $523$ and $541$ by $23^2=529,$
and the one between $839$ and $853$ by $29^2=841.)$
The surviving gaps, with those of length $>12$ shown in boldface, are
\begin{equation}\begin{minipage}[c]{35pc}\label{d.gaps}
$(199,211),$ $(211,223),$ $\mathbf{(293,307)},$
$\mathbf{(317,331)},$ $(467,479),$ $(529,541),$ $(661,673),$\\
$\mathbf{(773,787)},$ $(797,809),$ $(841,853),$ $\mathbf{(863,877)},$
$\mathbf{(887,907)},$ $(997,1009).$
\end{minipage}\end{equation}

Since most of these gaps have length $12,$ a large fraction of
the values of $N$ that this list informs us are not covered
by Proposition~\ref{P.leq10} are of the form $N=p_{\max}^d+11.$
Many such cases, including all in the above
list, can be eliminated using the following result.

\begin{lemma}\label{L.relpm}
Suppose~\textup{(\ref{d.N-i-j_rep})}
and~\textup{(\ref{d.aaabbb2})} are decompositions of $N$
such that for every prime $p$ dividing $N(N{-}\<1)(N{-}\<2),$ at
least one of these decompositions is $\!p\!$-acceptable.
Suppose moreover that in~\textup{(\ref{d.N-i-j_rep})},
$i$~and $j$ are relatively prime to one another, and
$N{-}\<i{-}\<j$ is relatively prime to $i+j{-}\<1.$

Then no divisor of
$N$ or $N{-}\<1$ is a relevant prime; hence $C\ |\ N{-}\<2.$
\end{lemma}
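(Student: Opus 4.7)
The plan is to use Kummer's criterion (Lemma~\ref{L.Kummer}) in the form given in Definition~\ref{D.accept}: the hypothesis forces decomposition~(\ref{d.N-i-j_rep}) to be $p$-acceptable for every relevant prime $p$, and we extract base-$p$ information from this for primes $p$ dividing $N$ or $N{-}1$, then show that the two coprimality hypotheses rule out both possibilities. Once no relevant prime divides $N$ or $N{-}1$, the definition of $C$ in~(\ref{d.C}) reduces to a product of pairwise coprime prime-power factors of $N{-}2$, giving $C\mid N{-}2$.

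First I would handle a relevant prime $p$ with $p^d\mid N$. Since~(\ref{d.N-i-j_rep}) is $p$-acceptable, adding $N{-}i{-}j$, $i$, and $j$ in base $p$ produces no carries. Because $N\equiv 0\pmod{p^d}$, the last $d$ base-$p$ digits of $N$ are all $0$; the no-carry condition then forces the corresponding digits of $N{-}i{-}j$, $i$, and $j$ to sum (column by column) to $0$, hence each of these three summands is divisible by $p^d$. In particular $p\mid i$ and $p\mid j$, contradicting $\gcd(i,j)=1$.

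Next I would handle a relevant prime $p$ with $p^d\mid N{-}1$. The last $d$ base-$p$ digits of $N$ are then $0,\dots,0,1$, so the no-carry condition makes the digit-columns at positions $1,\dots,d{-}1$ sum to $0$ (forcing each of $N{-}i{-}j$, $i$, $j$ to be $\equiv$ its unit digit modulo $p^d$) and the units-column to sum to $1$ (forcing exactly one of the three summands to be $\equiv 1$ and the other two to be $\equiv 0$ modulo $p^d$). If $N{-}i{-}j\equiv 1$, then $p\mid i$ and $p\mid j$, again contradicting $\gcd(i,j)=1$. Otherwise, say $i\equiv 1$ and $j\equiv 0\pmod{p^d}$ (the remaining case is symmetric); then $p^d\mid i+j-1$ and $p^d\mid N{-}i{-}j$, so $p\mid\gcd(N{-}i{-}j,\,i+j{-}1)$, contradicting the second coprimality hypothesis. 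This is the step that uses the hypothesis $\gcd(N{-}i{-}j,i+j{-}1)=1$ and is the one place the argument requires a little cleverness — the rest is direct decoding of $p$-acceptability.

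Having ruled out relevant primes dividing $N$ or $N{-}1$, I would conclude by invoking the definition~(\ref{d.C}): the only surviving contributions to $C$ are the relevant prime-power factors of $N{-}2$, and these (being prime powers for distinct primes) are pairwise coprime divisors of $N{-}2$, so their product $C$ divides $N{-}2$.
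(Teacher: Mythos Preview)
Your proof is correct and follows essentially the same route as the paper's. The paper's argument is terser---it works only modulo $p$ (rather than $p^d$) and simply notes that for a relevant $p\mid N{-}1$, two of $N{-}i{-}j,\,i,\,j$ are $\equiv 0$ and one is $\equiv 1$, then dispatches each of the three sub-cases with the appropriate coprimality hypothesis---but the logic is identical to what you spell out in detail.
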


\begin{proof}
Any relevant prime $p$ dividing $N$ would have to divide
all of $N{-}\<i{-}\<j,$ $i$ and $j,$ which is excluded by
the relative primality of the last two of these,
while a relevant prime $p$ dividing $N{-}\<1$ must divide two of
$N{-}\<i{-}\<j,$ $i$ and $j,$ with the remaining one being
$\equiv 1~(\r{mod}\ p).$
Each choice of which two terms are $\equiv 0$ modulo $p$
and which is $\equiv 1$ is excluded by
one or the other of our relative primality hypotheses.
\end{proof}

Now if $N=p_{\max}^d+11<1009$ is a counterexample to
Conjecture~\ref{Cj}, with $\!p_{\max}\!$-admissible
decomposition~(\ref{d.N-i-j_rep}), then Propositions~\ref{P.(N-2)+1+1}
and~\ref{P.leq10} exclude all possible values for $i+j$ other than $11.$
Since $i+j=11$ is prime, $i$ and $j$ must be relatively prime;
if, moreover, $p_{\max}^d$ is not a power of $2$ or $5$ (as
indeed none of the first members of the pairs in~(\ref{d.gaps}) are),
it must be relatively prime to $i+j-1=10.$
So Lemma~\ref{L.relpm} tells us that in these cases,
$C\mid N{-}\<2,$ so $C\leq 1009-2.$
Lemma~\ref{L.relpm} also shows that $2\ |\ N(N{-}\<1)$ is
not a relevant prime, so~(\ref{d.C>3^3_2^6}) says that
$C>1728\cdot(1-4\cdot 200^{-1})>1693$ (since the values of $N$ arising
from~(\ref{d.gaps}) are all $> 200),$ contradicting the
preceding sentence, and eliminating these cases.

The values of $N<1009$ not eliminated by this argument are those
that exceed the greatest prime power $\leq$ them by at least $12.$
From~(\ref{d.gaps}), these are
\begin{equation}\begin{minipage}[c]{35pc}\label{d.329...}
$305,\ 306;\ \ 329,\ 330;\ \ 785,\ 786;\ \ 875,\ 876;$ \ and
\  $899,\,\cdots,\,906\ (8\!$ terms).
\end{minipage}\end{equation}

Feeling that case-by-case elimination of possible $N$ is
not a way I want to continue to pursue this problem, I have
only checked the first four of these.
By looking at the primes dividing $N,\ N{-}\<1$ and $N{-}\<2$
in these cases, it is not hard to find properties that
exclude each of these values of $N.$
This is done in the proposition below.
Let us start with a definition and two lemmas that formalize a
kind of observation that we will use.
(Note that the ``digit-sum $\geq k$'' condition in the next
definition is simply the necessary and
sufficient condition for there to be {\em any} $\!p\!$-acceptable
decompositions of $N.)$

\begin{definition}\label{D.thresh}
If $k$ and $N$ are positive integers, and $p$ a prime such
that the base-$\!p\!$ expression for $N$ has digit-sum
$\geq k,$ then by the {\em $\!p\!$-threshold} for $N$ \textup{(}with
respect to $k)$ we shall mean the greatest integer $m$ occurring
in any $\!p\!$-acceptable expression for $N$ as a sum of $k$
positive integers.
\end{definition}

From the characterization of $\!p\!$-acceptable decompositions
at the end of Definition~\ref{D.accept}, we see

\begin{lemma}\label{L.thresh}
For $k=3,$ and $N$ and $p$ as
in Definition~\ref{D.thresh}, the $\!p\!$-threshold for $N$
is $N-p^{e_1}-p^{e_2},$ where $p^{e_1}$ is the largest
power of $p$ dividing $N,$ and $p^{e_2}$ is the largest
power of $p$ dividing $N-p^{e_1}.$

\textup{(}For general $k,$ one has the corresponding description with
$k-1$ successive subtractions.\textup{)}\qed
\end{lemma}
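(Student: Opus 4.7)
The plan is to recast the threshold problem as a minimization: if $N = m + a_2 + a_3$ is a $p$-acceptable decomposition with $a_2, a_3 \geq 1$, then maximizing $m$ is the same as minimizing the sum $a_2 + a_3$. Writing $d_i(x)$ for the $i$-th base-$p$ digit of $x$, the criterion at the end of Definition~\ref{D.accept} says the decomposition is $p$-acceptable if and only if $d_i(a_2) + d_i(a_3) \leq d_i(N)$ at every position $i$, with $d_i(m)$ filling in the gap.

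The first step is the elementary building block: among positive integers $a$ with $d_i(a) \leq d_i(N)$ for every $i$, the minimum is $a = p^{e_1}$, where $p^{e_1}$ is the largest power of $p$ dividing $N$ (equivalently, $e_1$ is the least index with $d_{e_1}(N) > 0$). Any admissible positive $a$ must have a nonzero digit at some $i \geq e_1$, so $a \geq p^{e_1}$; and $p^{e_1}$ itself is admissible. In particular each of $a_2, a_3$ is individually $\geq p^{e_1}$.

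Next I would pin down the minimum of the sum. Setting $a_2 = p^{e_1}$ reduces the constraint on $a_3$ to $d_i(a_3) \leq d_i(N) - d_i(p^{e_1}) = d_i(N - p^{e_1})$ at every $i$, so the same building block, applied with $N$ replaced by $N - p^{e_1}$, gives $a_3 \geq p^{e_2}$ with $p^{e_2}$ the largest power of $p$ dividing $N - p^{e_1}$, matching the statement. It remains to rule out that a pair with $a_2 > p^{e_1}$ could beat $p^{e_1} + p^{e_2}$: since every admissible summand is $\geq p^{e_1}$, a strictly smaller sum would force $a_2 = a_3 = p^{e_1}$, which requires $d_{e_1}(N) \geq 2$; but in that case $p^{e_2} = p^{e_1}$, so no improvement is possible.

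For attainment, I would exhibit the explicit $p$-acceptable decomposition $(N - p^{e_1} - p^{e_2},\, p^{e_1},\, p^{e_2})$; the digit-sum criterion holds by construction of $e_1, e_2$, and the first summand is positive because the base-$p$ digit-sum of $N$ is assumed to be at least $3$, so removing two units leaves at least one. The main obstacle is the case split between $d_{e_1}(N) \geq 2$ and $d_{e_1}(N) = 1$; both are absorbed uniformly by the definition of $p^{e_2}$ as the largest power of $p$ dividing $N - p^{e_1}$. The parenthetical generalization to arbitrary $k$ follows by the same argument applied inductively, peeling off successive summands $p^{e_1}, \ldots, p^{e_{k-1}}$ and reapplying the building block to each shrinking residue.
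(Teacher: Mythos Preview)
Your approach is exactly what the paper gestures at: it offers no proof beyond the remark that the lemma is read off from the digit-wise characterization of $p$-acceptability at the end of Definition~\ref{D.accept}, and your write-up supplies precisely those details.

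One step is under-justified. The clause ``since every admissible summand is $\geq p^{e_1}$, a strictly smaller sum would force $a_2 = a_3 = p^{e_1}$'' does not follow from the inequality $a_j \geq p^{e_1}$ alone: when $e_2 > e_1$ there is a priori room for values strictly between $p^{e_1}$ and $p^{e_2}$. What you need (and what your building block, reapplied, immediately gives) is that in fact no admissible positive summand lies in that interval: since $d_{e_1}(N)=1$ in this case and $d_i(N)=0$ for $e_1 < i < e_2$, any admissible $a < p^{e_2}$ has all digits zero except possibly a single $1$ at position $e_1$, hence equals $p^{e_1}$. With that sentence inserted, your optimality argument goes through cleanly (alternatively: observe that $a_2+a_3$ itself satisfies $d_i(a_2+a_3)\le d_i(N)$ with digit-sum $\ge 2$, and minimize that directly).
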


\begin{lemma}\label{L.digsum5}
Let $N$ be a positive integer, and
$p_{\max}^d$ the largest prime power $\leq N.$
Suppose that for some prime $p_0\neq p_{\max}$ and not dividing $N,$
the base-$\!p_0\!$ expression for $N$ has digit-sum~$\leq 5,$ and
that for all primes $p$ dividing $N,$ and also for $p=p_0,$ the
$\!p\!$-threshold for $N$ with respect to $k=3$ is $<p_{\max}^d.$

Then $N$ is not a counterexample to Conjecture~\ref{Cj} for $k=3.$
\end{lemma}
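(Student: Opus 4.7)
The plan is to argue by contradiction, using the digit-sum hypothesis to force some summand of the ``non-$D_1$'' decomposition to be a bare power of~$p_0$, which then cannot carry any prime-power factor of~$N$.

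Suppose $N$ admits three decompositions $D_1,D_2,D_3$ into three positive integer summands such that for every prime $p$ at least one of them is $p$-acceptable. By the observation at the start of Section~\ref{S.k=3}, $p_{\max}$-acceptability of some decomposition forces it to contain a summand $\geq p_{\max}^d$; without loss of generality call this $D_1=(N-i-j,\,i,\,j)$ with $N-i-j\geq p_{\max}^d$. For each $p\in\{p_0\}\cup\{p:p\mid N\}$, the $p$-threshold (Definition~\ref{D.thresh}) is by hypothesis $<p_{\max}^d$, so no $p$-acceptable decomposition of $N$ admits a summand as large as $p_{\max}^d$; in particular $D_1$ is not $p$-acceptable, and each such prime must instead be handled by $D_2$ or $D_3$.

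Now exploit the digit-sum hypothesis. Without loss of generality $D_2=(a_1,a_2,a_3)$ is $p_0$-acceptable. Since $p_0$-acceptability means no carrying when $a_1+a_2+a_3$ is computed in base $p_0$, the base-$p_0$ digit sums are additive:
\[
\mathrm{ds}_{p_0}(a_1)+\mathrm{ds}_{p_0}(a_2)+\mathrm{ds}_{p_0}(a_3)\ =\ \mathrm{ds}_{p_0}(N)\ \leq\ 5.
\]
Each positive $a_i$ has $\mathrm{ds}_{p_0}(a_i)\geq 1$, so three positive integers summing to at most $5$ must contain at least one equal to $1$. Hence some summand $a_i$ has base-$p_0$ digit sum exactly $1$, i.e., equals $p_0^t$ for some $t\geq 0$.

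To finish, I would argue that $D_2$ must itself be $p$-acceptable for at least one prime $p\mid N$. If not, then $D_3$ alone covers every prime dividing $N$, so for each prime-power factor $p^{e}$ of $N$ we have $p^{e}\mid b_i$ for all $i$, forcing $N\mid b_i$, which is incompatible with $b_1+b_2+b_3=N$ and positive $b_i$. Pick such a $p_1\mid N$ and let $p_1^{e_1}$ be the corresponding prime-power factor of $N$. Then $p_1^{e_1}$ divides every summand of $D_2$, and in particular divides the summand $p_0^t$. But $p_1\mid N$ while $p_0\nmid N$ gives $p_1\neq p_0$; hence $\gcd(p_1,p_0^t)=1$, forcing $e_1=0$ and contradicting $p_1\mid N$. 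The only point that asks for any care is the initial reduction to $D_1=(N-i-j,i,j)$ with $N-i-j\geq p_{\max}^d$, which just recapitulates the standard observation from the start of Section~\ref{S.k=3}; the rest is a short chain of digit-sum and coprimality observations and should present no real obstacle.
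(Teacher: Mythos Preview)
Your proof is correct and follows essentially the same route as the paper's: use the threshold hypothesis to force $p_0$ and all primes dividing $N$ onto the two ``other'' decompositions, use the digit-sum bound $\leq 5$ to find a summand of the $p_0$-acceptable decomposition that is a pure power of $p_0$, and then observe that this summand cannot be divisible by any prime factor of $N$, forcing all such primes onto the third decomposition and making each of its summands a multiple of $N$. The only cosmetic difference is that the paper argues the last step directly (the power-of-$p_0$ summand is coprime to $N$, hence $D_2$ handles no prime divisor of $N$), whereas you phrase it as a two-branch contradiction; the content is identical.
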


\begin{proof}
Suppose $N$ were a counterexample, with
decompositions~(\ref{d.N-i-j_rep}),~(\ref{d.aaabbb2}),
where the first is $\!p_{\max}$-acceptable.
By our hypothesis on $\!p\!$-thresholds,
neither $p_0$ nor any of the primes dividing $N$ can
be relevant primes; so for each of these, one of the
decompositions of~(\ref{d.aaabbb2}) must be $\!p\!$-acceptable.
Let $a_0+a_1+a_2$ be the $\!p_0\!$-acceptable decomposition.

Since in base $p_0,$ $N$ has digit-sum $<6,$
at least one of $a_0,\,a_1,\,a_2$ must have digit-sum $<2,$
i.e., must be a power of $p_0.$
This term will be relatively prime to all the primes
dividing $N,$ so for each of those primes $p,$ the
$\!p\!$-acceptable decomposition must be $b_0+b_1+b_2.$
This makes each of $b_1,\,b_2,\,b_3$ a multiple of $N,$
contradicting the assumption that they sum to $N.$
\end{proof}

We can now verify

\begin{proposition}\label{P.304}
There are no counterexamples to Conjecture~\ref{Cj}
for $k=3$ with $N<785.$
\end{proposition}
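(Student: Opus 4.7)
The previous section has already reduced the problem to the four values $N \in \{305,\,306,\,329,\,330\}$ from~(\ref{d.329...}): for smaller $N$, Propositions~\ref{P.(N-2)+1+1} and~\ref{P.leq10}, together with the Lemma~\ref{L.relpm}-based argument for $N = p_{\max}^d + 11$, already exclude any counterexample. I plan to dispatch each of the four values in turn.

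The case $N = 306$ is immediate: since $306 = 17 + 17^2$, the base-$17$ digit-sum of $N$ is $2$, and by~(\ref{d.sum<k}) the prime $17$ divides every proper trinomial coefficient of weight $306$. For $N = 305 = 5\cdot 61$ with $p_{\max}^d = 293$, I plan to apply Lemma~\ref{L.digsum5} directly with $p_0 = 2$: the base-$2$ digit-sum of $305$ is $4 \leq 5$, and the $5$-, $61$-, and $2$-thresholds computed via Lemma~\ref{L.thresh} come out to $275$, $183$ and $288$, all less than $293$.

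For $N = 329 = 7\cdot 47$ and $N = 330 = 2\cdot 3\cdot 5\cdot 11$ (both with $p_{\max}^d = 317$), Lemma~\ref{L.digsum5} does not apply off-the-shelf, because the $2$-threshold (and, for $N = 330$, also the $3$-threshold) exceeds $p_{\max}^d$; my plan is to adapt its proof. Combining Propositions~\ref{P.(N-2)+1+1} and~\ref{P.leq10} with $N - i - j \geq p_{\max}^d$ forces $N - i - j \in \{317,\,318\}$ for $N = 329$ and $N - i - j \in \{317,\,318,\,319\}$ for $N = 330$; comparing base-$2$ expansions shows that for each of these values the set of base-$2$ digit positions of $N - i - j$ is not contained in that of $N$, so~(\ref{d.N-i-j_rep}) is not $2$-acceptable, and an analogous base-$3$ comparison rules out $3$-acceptability when $N = 330$. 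With these explicit checks replacing the offending threshold hypotheses, the argument from the proof of Lemma~\ref{L.digsum5} goes through with $p_0 = 2$: the $2$-acceptable decomposition must be one of~(\ref{d.aaabbb2}), and since the base-$2$ digit-sum of $N$ is $4$, one of its summands is a pure power of $2$, hence coprime to every odd prime dividing $N$. Consequently every odd prime $p \mid N$ must be handled by the other decomposition $(b_0,b_1,b_2)$, which forces each $b_i$ to be divisible by $7\cdot 47 = 329$ (for $N = 329$) or by $3\cdot 5\cdot 11 = 165$ (for $N = 330$); either way $b_1 + b_2 + b_3 > N$, a contradiction. The main technical obstacle is precisely the need to substitute a direct check for Lemma~\ref{L.digsum5}'s threshold hypothesis at the primes $2$ and (when $N = 330$) $3$; what makes this feasible is that the earlier propositions leave only two or three values of $N - i - j$ to inspect.
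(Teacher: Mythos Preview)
Your proposal is correct and, for $N\in\{305,306,329\}$, follows the paper's proof essentially verbatim (your direct inspection of the two possible values of $N-i-j$ for $N=329$ is equivalent to the paper's observation that $2$-acceptability of~(\ref{d.N-i-j_rep}) would force $i+j=9$, which Proposition~\ref{P.leq10} excludes).

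For $N=330$ you take a different but equally short route. You run the Lemma~\ref{L.digsum5} argument against the odd primes $3,5,11$ dividing $N$: after checking that~(\ref{d.N-i-j_rep}) cannot be $2$- or $3$-acceptable (and noting the $5$- and $11$-thresholds are $300$ and $308$, both below $317$), the $2$-acceptable member of~(\ref{d.aaabbb2}) has a power-of-$2$ term, so the other member must absorb $3,5,11$, forcing each $b_i$ to be a multiple of $165$. The paper instead reverses the roles of $N$ and $N{-}1$: it works with the primes $7,47$ dividing $N{-}1=329$, uses the stronger observation that a $2$-acceptable decomposition of $330$ has \emph{two} power-of-$2$ terms (hence cannot be $p$-acceptable for any odd $p\mid N{-}1$), and concludes that the remaining decomposition must have a term divisible by $7\cdot 47=329$. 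Each approach requires exactly one ad~hoc non-relevance check beyond $p=2$ (yours for $p=3$, the paper's for $p=7$), so neither is materially simpler; yours has the mild advantage of staying within the literal template of Lemma~\ref{L.digsum5}.
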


\begin{proof}
Examining~(\ref{d.329...}), we see that we
must check $N=305,\ 306,\ 329$ and $330.$
From~(\ref{d.gaps}) we see that for the first
two of these, $p_{\max}^d=293,$ while for the last two it is $317.$

The case $306$ is excluded by~(\ref{d.sum<k}), since its
base-$\!17\!$ expansion is $110_{17},$
while $305$ is excluded by Lemma~\ref{L.digsum5} with $p_0=2.$

The case $329,$ which has base-$\!2\!$ expression $101001001_2,$
would likewise be excluded by that lemma with $p_0=2,$ except
that its $\!2\!$-threshold is $320,$ which is not $<317.$
However, in the proof
of that lemma, the condition that the $\!p_0\!$-threshold of
$N$ be $<p_{\max}^d$ is used only to rule out the possibility that
$p_0$ is a relevant prime.
Now if $2$ were a relevant prime, we would have
$i+j\leq N-p_{\max}= 329-317=14,$ and we see from the above
base-$\!2\!$ expression that $i+j$ would have to be $8+1=9.$
This is ruled out by Proposition~\ref{P.leq10}; so $329$ is excluded.

To handle $330,$ note that again $p_{\max}=317,$ and that now
$330=101001010_2.$
Here we will use an argument similar to that
of Lemma~\ref{L.digsum5}, but with the roles
of $N$ and $N{-}\<1$ reversed.
Essentially the same reasoning as in the preceding case
shows that $2$ cannot be a relevant prime.
Moreover, since the base-$\!2\!$ expression for $N$ has
digit-sum $4,$ any $\!2\!$-acceptable decomposition of $N$ must have
two terms that are powers of $2;$ hence such a
decomposition cannot be $\!p\!$-acceptable for any $p\ |\ N{-}\<1.$
Looking at the prime factorization of $N{-}\<1=329=47\cdot 7,$
we see that $47$ has threshold $<317,$ while
$7$ cannot be relevant by essentially the same reasoning we
applied to $2;$ so any three decompositions of $330$ comprising a
counterexample to Conjecture~\ref{Cj} must consist of
a $\!p_{\max}\!$-acceptable decomposition, a $\!2\!$-acceptable
decomposition, and a decomposition acceptable for both factors
of $N{-}\<1.$
This forces at least one summand in the last of these decompositions
to be divisible by the product of those factors, $N{-}\<1,$ making
it too large.
\end{proof}

\section{Quick counterexamples to some plausible
strengthenings of Conjecture~\ref{Cj}.}\label{S.ceg}

It is natural to ask whether Conjecture~\ref{Cj} is the
``right'' statement, or whether some stronger statement might hold.
For instance, for $k>2,$ might the number of proper
$\!k\!$-nomial coefficients that
are guaranteed to have a common divisor grow faster than $k$?

An example showing that {\em four}
proper {\em trinomial} coefficients of the same
weight need not have a common divisor is given by the following four
decompositions of $159.$
\begin{equation}\begin{minipage}[c]{35pc}\label{d.159}
$157+1+1,\qquad 144+12+3,\qquad 53+53+53,\qquad 79+79+1.$
\end{minipage}\end{equation}
(The only primes not handled by the first decomposition are
the divisors of $159\,=\,53\cdot 3$ and $158\,=\,79\cdot 2.$
Of these, $2$ and $3$ are handled by the second decomposition,
and the remaining two primes by the last two.)

Might the importance of the condition that our multinomial
coefficients be proper and of nomiality $k$ simply
be to insure that all their arguments are $\leq N{-}\<k\<{+}\<1$?
If so, we would expect binomial coefficients with all
arguments strictly greater than $1$ to behave as well as is
conjectured for trinomial coefficients.
A counterexample is given by the following three decompositions of $46.$
\begin{equation}\begin{minipage}[c]{35pc}\label{d.46}
$44+2,\qquad 36+10,\qquad 23+23.$
\end{minipage}\end{equation}

Finally, might it be possible to
replace the assumption of $k$ multinomial coefficients, of equal weight
and common nomiality $k,$ with that of a finite family of multinomial
coefficients of equal weights but possibly varying nomialities,
such that the sum of the reciprocals of those nomialities is~$\leq 1$?
Here a counterexample is given by the following three
decompositions of $65,$
\begin{equation}\begin{minipage}[c]{35pc}\label{d.65}
$64+1,\qquad 25+25+5+5+5,\qquad 13+13+13+13+13,$
\end{minipage}\end{equation}
where the reciprocals of the nomialities sum
to $1/2\,+\,1/5\,+\,1/5\,=\,9/10<1.$
(If one wants the sum to be exactly $1,$
one can replace any two terms of the second decomposition by
their sum, and similarly in the third decomposition.)

In \cite[\S4]{from_nomial}, it is shown that, assuming the truth of
Schinzel's Conjecture on prime values assumed
by polynomials with integer coefficients~\cite{SS}, all
these counterexamples belong to infinite families.

\section{Where do we go from here?}\label{S.whither}

A proof of Conjecture~\ref{Cj}, even for $k=3,$ may well require
an approach entirely different from that of
\S\S\ref{S.k=3}-\ref{S.785} above.
On the other hand, if it is false, and we want to find a
counterexample, or if, on the contrary, the approach of this note can
somehow be extended to a proof, our results indicate a bifurcation
of the problem into two cases, the one where the decomposition
with largest summand has the other two summands $i$ and $j$ both
$1,$ and the case where $i$ and $j$ sum to at least $11.$
(Might one get additional mileage by subdividing the
latter case further, according to whether {\em one}
of $i$ and $j$ equals $1$?)

In the case $i+j>2,$ we have made use of primes
dividing $N(N{-}\<1)(N{-}\<2);$
but I suspect that these are not enough -- that we must in
some way also use the facts that
for every prime $p$ dividing $(N{-}\<3)\dots(N{-}\<i\<{-}\<j\<{+}\<1),$
one of our decompositions is $\!p\!$-acceptable.
A suggestion as to how this might be attempted is sketched
in \mbox{\cite[\S5]{from_nomial}}.

I have not tried a computer search for counterexamples.
Someone skilled at such searches might use the
results proved in this note to limit the cases (both the values
of $N$ and the triads of decompositions thereof) to be checked.
(The reference in~\cite[p.131]{RKG} to ``fairly extensive computer
evidence'' was a misunderstanding; all the computations Wasserman and
I did were by hand.
See \cite[\S3]{from_nomial} for an example.)

Some additional restrictions limiting the decompositions
of $N$ that would have to be checked in such searches
are noted in \cite[\S6]{from_nomial}.
The idea is that in the proof of
Proposition~\ref{P.(N-2)+1+1} above, if instead of
the estimate $a_1\,a_2\,a_3\,\leq\,N^3/\,3^3,$ we use
$a_1\,a_2\,a_3\,\leq\,a_1\,N^2/\,2^2,$ we get, instead
of a lower bound on $N,$ a lower bound on $a_1$ independent of $N,$
while if we do the same in the proof of Lemma~\ref{L.C} we
get, instead of a bound on $C$ independent of $N,$ a
bound on $a_1\,C$ that grows linearly in $N.$
The bounds in question say that when $i=j=1,$ all
$a_i$ and $b_i$ are $\geq 216,$ while when $i+j>1,$ all
$a_i\,C$ and $b_i\,C$ are $\geq 108(N\<{-}\<4);$ in each case
with strengthenings under additional assumptions.

Turning to general $k,$ we remark that the equality of the
degrees of the two sides of~(\ref{d.N^6/3^6}) is special to $k=3.$
For large $k,$ the degree of
the left-hand side of the analogous inequality
grows as $k^2/2,$ while that of the right-hand side grows as $k^2;$
so the methods we have been using for $k=3$ are not likely to
extend to larger $k.$
Conjecture~\ref{Cj} might in fact be too strong; perhaps the largest
$k'$ such that every family of $k'$ proper
$\!k\!$-nomial coefficients has a
common divisor satisfies $k'\approx k/\sqrt 2$ when $k$ is large,
rather than $k'=k.$

The hope for a proof of Conjecture~\ref{Cj} (or some
variant) in the spirit of the
group-theoretic proof of Theorem~\ref{T.ESz} is appealing.
Sticking with $k=3$ for simplicity, let us ask

\begin{question}\label{Q.XYZ}
Suppose $G$ is a group, and $X,\ Y,\ Z$ are finite
transitive $\!G\!$-sets, such that
\begin{displaymath}\begin{minipage}[c]{33pc}
$\r{g.c.d.}(\r{card}(X),\ \r{card}(Y),\ \r{card}(Z))\ =\ 1,$
\end{minipage}\end{displaymath}
but such that none of the $\!G\!$-sets $X\times Y,$ $Y\times Z,$
$Z\times X$ is transitive \textup{(}so that
no two of $\r{card}(X),$ $\r{card}(Y),$ $\r{card}(Z)$
are relatively prime\textup{)}.

What, if anything, can one conclude about the orbit-structure
of the $\!G\!$-set $X\times Y\times Z$?
\end{question}

On the other hand,
other interpretations of multinomial coefficients might be
useful in attacking Conjecture~\ref{Cj}.
One result of that sort, conjectured by F.\,Dyson, and proved by
several others (\cite{IJG} and papers cited there) describes
$\ch(a_1,\dots,a_k)$ as the constant term of the Laurent
polynomial $\prod_{i\neq j}(1-x_j/x_i)^{a_i}$ in indeterminates
$x_1,\dots,x_n.$


\end{document}